\DeclareFontFamily{OT1}{pzc}{}
\DeclareFontShape{OT1}{pzc}{m}{it}%
             {<-> s * [1.195] pzcmi7t}{}
\DeclareMathAlphabet{\mathscr}{OT1}{pzc}%
                                 {m}{it}
\newcommand{\colim}{\operatorname{colim}}
\newcommand{\Spec}{\operatorname{Spec}}
\newcommand{\isomt}{{\stackrel{{\scriptscriptstyle{\sim}}}{\;\rightarrow\;}}}
\renewcommand{\O}{{\mathcal O}}
\renewcommand{\hom}{\operatorname{Hom}}
\newcommand{\cplx}{{\mathbb C}}
\newcommand{\Z}{{\mathbb Z}}
\newcommand{\A}{{\mathbb A}}
\newcommand{\I}{{\mathrm I}}
\newcommand{\aone}{{\mathbb A}^1}
\newcommand{\ho}[1]{\mathscr{H}({#1})}
\newcommand{\Nis}{\operatorname{Nis}}
\newcommand{\Zar}{\operatorname{Zar}} 
\newcommand{\Sm}{\mathrm{Sm}}
\newcommand{\sPre}{\mathrm{sPre}}
\newcommand{\Ho}{\mathrm{Ho}}
\newcommand{\Gr}{\mathrm{Gr}}
\newcommand{\F}{{\mathscr F}}
\newcommand{\longhookrightarrow}{\ensuremath{\lhook\joinrel\relbar\joinrel\rightarrow}}
\newcommand{\op}{{\mathrm{op}}}
\newcommand{\Map}{\operatorname{Map}}
\newcommand{\Sing}[1]{\operatorname{Sing}^{#1}\!}
\newcommand{\Singaone}{\operatorname{Sing}^{\aone}\!\!}
\newcommand{\diag}{\operatorname{diag}}
\newcommand{\aff}{\mathit{aff}}
\newcommand{\sep}{\mathit{sep}}
\newcommand{\Addresses}{{
  \bigskip
  \footnotesize

  A.~Asok, \textsc{Department of Mathematics, University of Southern California,
    Los Angeles, CA 90089-2532, United States;} \textit{E-mail address:} \url{asok@usc.edu}

  \medskip

  M.~Hoyois, \textsc{Department of Mathematics, Massachusetts Institute of Technology,
    Cambridge, MA 02139-4307, United States;} \textit{E-mail address:} \url{hoyois@mit.edu}

  \medskip

  M.~Wendt, \textsc{Mathematics Institute, Zeeman Building, University of Warwick, Coventry CV4 7AL, UK;} \textit{E-mail address:} \url{M.Wendt@warwick.ac.uk}

}}
\newcounter{intro}
\theoremstyle{plain}
\newtheorem{thm}{Theorem}[subsection]
\newtheorem{lem}[thm]{Lemma}
\newtheorem{cor}[thm]{Corollary}
\newtheorem{prop}[thm]{Proposition}
\newtheorem*{claim*}{Claim}  
\newtheorem*{thm*}{Theorem}
\newtheorem*{problem*}{Problem}
\newtheorem{thmintro}{Theorem}
\theoremstyle{definition}
\newtheorem{defn}[thm]{Definition}
\theoremstyle{remark}
\newtheorem{rem}[thm]{Remark}
\newtheorem{ex}[thm]{Example}
\numberwithin{equation}{section}
\begin{document}
\pagestyle{fancy}
\renewcommand{\sectionmark}[1]{\markright{\thesection\ #1}}
\fancyhead{}
\fancyhead[LO,R]{\bfseries\footnotesize\thepage}
\fancyhead[LE]{\bfseries\footnotesize\rightmark}
\fancyhead[RO]{\bfseries\footnotesize\rightmark}
\chead[]{}
\cfoot[]{}
\setlength{\headheight}{1cm}

\author{Aravind Asok\thanks{Aravind Asok was partially supported by National Science Foundation Award DMS-1254892.} \and Marc Hoyois\thanks{Marc Hoyois was partially supported by National Science Foundation Award DMS-1508096.} \and Matthias Wendt\thanks{Matthias Wendt was partially supported by EPSRC grant EP/M001113/1.}}


\title{{\bf Affine representability results in \\ $\aone$-homotopy theory I: vector bundles}}
\date{}
\maketitle

\begin{abstract}
We establish a general ``affine representability" result in $\aone$-homotopy theory over a general base.  We apply this result to obtain representability results for vector bundles in $\aone$-homotopy theory.  Our results simplify and significantly generalize F. Morel's $\aone$-representability theorem for vector bundles.
\end{abstract}

\begin{footnotesize}
\setcounter{tocdepth}{1}
\tableofcontents
\end{footnotesize}

\section{Introduction}
From its inception in the classical work of H. Cartan, S. Eilenberg and its development in the work of J.-P. Serre and H. Bass (among others), the theory of projective modules over a (commutative) ring has drawn inspiration from the theory of topological vector bundles.  J.F. Adams in his MathSciNet review of Bass's paper \cite{Bass} writes that ``[t]his leads to the following programme: take definitions, constructions and theorems from bundle-theory; express them as particular cases of definitions, constructions and statements about finitely-generated projective modules over a general ring; and finally, try to prove the statements under suitable assumptions."  The purpose of this paper is perhaps best understood from the standpoint of this philosophy: we expose an algebro-geometric analog of the classical Pontryagin--Steenrod representability theorem for vector bundles \cite[19.3]{Steenrod}.

To set the stage, fix a commutative Noetherian base ring $k$ and suppose $X$ is a smooth $k$-scheme.  In this setting, F. Morel and V. Voevodsky \cite{MV} constructed a homotopy category for schemes $\ho{k}$ in which homotopies are, very roughly speaking, parameterized by the affine line $\aone_k$.  Paralleling the notation in topology, set $[X,Y]_{\aone} := \hom_{\ho{k}}(X,Y)$.  Write $\mathscr{V}_r(X)$ for the set of isomorphism classes of rank $r$ algebraic vector bundles on $X$ and $\Gr_r=\colim_{N\to\infty}\Gr_{r,N}$ for the infinite Grassmannian, which is an ind-scheme and can also be viewed as an object of $\ho{k}$.

There is a canonical map $\hom(X,\Gr_r) \to \mathscr{V}_r(X)$ defined by pulling back the tautological bundle on $\Gr_r$.
If $X$ is affine, this map is surjective since one may always choose generating global sections.  On the other hand, there is a canonical map $\hom(X,\Gr_r) \to [X,\Gr_r]_{\aone}$ given by taking $\aone$-homotopy classes.  The goal of this paper is to establish the following result, which is the ``affine representability" result mentioned in the title.

\begin{thmintro}[See Theorem \ref{thm:geometricrepresentation}]
\label{thmintro:main}
Suppose $k$ is a Noetherian commutative ring that is regular over a Dedekind domain with perfect residue fields (e.g., $k$ is any field or $k=\Z$). For any smooth affine $k$-scheme $X$ and any integer $r\geq 0$, there is a bijection $\mathscr{V}_r(X)\cong[X,\Gr_r]_{\aone}$ such that the following triangle commutes:
\[
\xymatrix{
\hom(X,\Gr_r) \ar[dr] \ar@{->>}[d] & \\
\ar[r]^-\cong \mathscr{V}_r(X) & [X,\Gr_r]_{\aone}.
}
\]
\end{thmintro}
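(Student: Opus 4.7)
The plan is to deduce this from the general affine representability meta-theorem established earlier in the paper, applied to the classifying simplicial presheaf of rank $r$ vector bundles. That meta-theorem asserts (roughly) that if $\F$ is a simplicial presheaf on smooth $k$-schemes satisfying Nisnevich descent and whose restriction to smooth affine $k$-schemes is $\aone$-invariant, then the canonical map $\pi_0 \F(X) \to [X,\F]_{\aone}$ is a bijection for every smooth affine $k$-scheme $X$.

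First, I would reframe the target. Let $B\GL_r$ denote the simplicial presheaf $X \mapsto N(\operatorname{Vect}_r(X))$, the nerve of the groupoid of rank $r$ vector bundles on $X$; by construction $\pi_0 B\GL_r(X) = \mathscr{V}_r(X)$. The tautological bundle on the ind-scheme $\Gr_r$ furnishes a natural morphism $\Gr_r \to B\GL_r$. A standard argument --- any rank $r$ bundle on a smooth affine $X$ is the pullback of the tautological bundle along a map into a sufficiently large finite Grassmannian, and two such maps giving isomorphic bundles coincide after enlarging the target --- shows that this morphism induces a bijection $[X,\Gr_r]_{\aone} \cong [X,B\GL_r]_{\aone}$ for every smooth affine $X$. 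Hence it suffices to prove $\mathscr{V}_r(X) \cong [X,B\GL_r]_{\aone}$.

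Second, I would verify the two hypotheses of the meta-theorem for $\F = B\GL_r$. Nisnevich descent is classical: vector bundles form a Nisnevich stack. The $\aone$-invariance of $B\GL_r$ on smooth affines reduces, via the identification of $\pi_0$ with isomorphism classes and of higher homotopy with automorphism groups of bundles, to the statement that every rank $r$ vector bundle on $X \times \aone$ is pulled back from $X$, for $X$ smooth affine over $k$ (applied also to endomorphism bundles to handle $\operatorname{Aut}$). This is the Bass--Quillen conjecture, known in the required generality by Lindel's theorem over a field combined with Popescu-type smooth approximation to pass to Noetherian rings regular over a Dedekind domain with perfect residue fields.

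The main obstacle is exactly this last input --- Bass--Quillen / Lindel in the required generality --- which is what forces the regularity hypothesis on $k$; the rest of the argument is essentially formal once the meta-theorem is in place. Granting all this, one obtains $\mathscr{V}_r(X) = \pi_0 B\GL_r(X) \cong [X,B\GL_r]_{\aone} \cong [X,\Gr_r]_{\aone}$, and commutativity of the triangle is automatic, since all three arrows are induced by ``pull back the tautological bundle.''
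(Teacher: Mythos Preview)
Your overall strategy matches the paper's: replace $\Gr_r$ by the classifying space of rank $r$ bundles and invoke the general affine representability theorem. But there is a genuine gap in the verification of the hypotheses. You phrase the meta-theorem as requiring that $\F$ itself be $\aone$-invariant on smooth affines, and then try to check this for $B\GL_r$ by saying Bass--Quillen handles $\pi_0$ while ``endomorphism bundles'' handle $\pi_1=\Aut$. This is false: $B\GL_r$ is \emph{not} $\aone$-invariant on affines. At the trivial bundle one has $\pi_1(B\GL_r(\Spec A))=\GL_r(A)$, and $\GL_r(A)\hookrightarrow \GL_r(A[t])$ is essentially never surjective (an elementary matrix with nonconstant polynomial entry is not in the image). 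Knowing that the endomorphism bundle is extended says nothing about its group of global units. The actual meta-theorem in the paper (Theorem~\ref{thm:representability}) only requires that $\pi_0(\F)$ be $\aone$-invariant on affines---exactly what Bass--Quillen supplies---and the failure of $\aone$-invariance on higher homotopy is absorbed by the $\Singaone$ construction inside its proof (Theorem~\ref{thm:singhasaffinebgproperty}). So the fix is simply to state and use the correct hypothesis; your attempted strengthening is both unnecessary and unprovable.

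A smaller issue: your ``standard argument'' for $[X,\Gr_r]_{\aone}\cong[X,B\GL_r]_{\aone}$ is not one. Two maps $X\to\Gr_{r,N}$ with isomorphic pullbacks do \emph{not} coincide after enlarging $N$ (take $X$ a point); at best they become naively $\aone$-homotopic, which is not the same as computing $[X,\Gr_r]_{\aone}$. The paper instead invokes the fact that $\gamma_r\colon\Gr_r\to B\mathbf{Vect}_r$ is already an isomorphism in $\ho{S}$ \cite[\S4, Proposition~2.6]{MV}, which gives the desired identification for all $X$ at once.
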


Theorem \ref{thmintro:main} is essentially the best possible $\aone$-representability theorem for vector bundles for which one could hope, for reasons we explain below. It will be established as a consequence of a general representability theorem in $\aone$-homotopy theory (see Theorem \ref{thm:representability}).  As with the classical Brown representability theorem, the main idea is to reduce the above result to suitable ``Mayer--Vietoris" and ``homotopy invariance" properties.

The functor $X \mapsto \mathscr{V}_r(X)$ cannot be representable on the category of {\em all} smooth $k$-schemes since ``$\aone$-homotopy invariance" can fail, i.e., the map
\begin{equation}
\label{eqn:bassquillen}
\mathscr{V}_r(X) \longrightarrow \mathscr{V}_r(X \times \aone)
\end{equation}
induced by pullback along the projection $X \times \aone \to X$ can fail to be a bijection.  For example, bijectivity of (\ref{eqn:bassquillen}) can fail (i) if $k$ is not regular, or (ii) when $r \geq 2$ and $X$ is a smooth $k$-scheme that is not affine, even if $k$ is a field (see, e.g., \cite{AsokDoran} for related statements about failure of $\aone$-homotopy invariance).  In contrast, if one restricts attention to smooth affine schemes, the assertion that the map (\ref{eqn:bassquillen}) is a bijection for all $r\geq 2$ is effectively the Bass--Quillen conjecture, which is known to hold in a number of cases \cite{Lindel,Popescu}.

If we restrict attention to smooth affine schemes, then the ``Mayer--Vietoris property" must be reformulated since, e.g., an open subscheme of an affine scheme need not be affine.  Here, Morel had a key insight: a good candidate for the ``Mayer--Vietoris property" for affine schemes should involve squares arising from certain analytic isomorphisms along non-zero divisors; he dubbed the result the ``affine Nisnevich Brown--Gersten property" \cite[Definition A.7]{MField}.  We prefer to call this property ``affine Nisnevich excision" to more clearly indicate the parallels with the classical topological situation.

Theorem \ref{thmintro:main} was first established by Morel \cite[Theorem 8.1(3)]{MField} under the much stricter hypotheses that $k$ is a perfect field and $r \neq 2$.\footnote{This form of the statement rests on a strong form of Gabber's presentation lemma \cite[Lemma 1.15]{MField}, the published proof of which requires $k$ to be infinite.}  Morel also states \cite[p. 199 footnote]{MField} that the result holds in the case $r = 2$, but this case depends on unpublished results of L.-F. Moser \cite{Moser}.  The hypothesis that $k$ be a perfect field arises from Morel's appeal to the structure theory of strongly and strictly $\aone$-invariant sheaves, which he developed in the first six chapters of \cite{MField}.  The hypothesis that $r \neq 2$ arises, in part, from Morel's reliance on results of T. Vorst regarding a $K_1$-analog of the Bass--Quillen conjecture \cite{Vorst1,Vorst2}.

This state of affairs was unsatisfactory: one has the feeling that all hypotheses beyond excision and homotopy invariance are of a purely technical nature and the case $r = 2$ is undoubtedly important. Our goal here is to give a proof of Theorem \ref{thmintro:main} that justifies this feeling.  While affine Nisnevich excision is, not unexpectedly, central to our discussion, our approach is entirely independent of Morel's book.

Our proof relies on two ideas.  First, instead of working directly with the Grassmannian, we work with the classifying stack of vector bundles and only mention the Grassmannian at the very end; we learned this idea from M. Schlichting, who used it to simplify Morel's proof by avoiding appeal to Vorst's $K_1$-analog of the Bass--Quillen conjecture \cite[Theorem 6.22]{Schlichting}.  This approach places the case $r = 2$ on the same footing as $r \geq 3$.  However, Schlichting's proof still uses the results of \cite[Appendix A]{MField} and therefore implicitly on the results of the first six chapters of \cite{MField}; as a consequence, his $\aone$-representability results are restricted to infinite perfect base fields.

The second idea appears in the proof of Theorem \ref{thm:affinedescent}, which shows that affine Nisnevich excision is sufficient for ``Nisnevich local" representability.  Part (i) of Theorem \ref{thm:affinedescent}, which is a Zariski version of the idea just mentioned is, in a sense, ``classical"; the idea goes back to Thomason and was exposed in a special case by Weibel \cite{Weibel}, though we believe our proof gets to the heart of the matter as it makes no mention of homotopy invariance.  We view Part (ii) of Theorem \ref{thm:affinedescent} as the key new ingredient (cf. \cite[p. 231 before Theorem A.11]{MField}).  In turn, the key technical ingredient in the proof of Theorem \ref{thm:affinedescent}(ii) is Lemma \ref{lem:etalenbd}, which is a geometric result about refining \'etale neighborhoods.  This refinement result, which is codified in Proposition \ref{prop:affineNisnevich}, is considerably more subtle than its Zariski counterpart (i.e., Proposition \ref{prop:affineZariski}).  Combining these results with ideas of Voevodsky (see Theorem \ref{thm:cd}) then yields our comparison between affine Nisnevich and Nisnevich excision.

Given Theorems \ref{thm:affinedescent} and \ref{thm:singhasaffinebgproperty}, the proof of Theorem \ref{thmintro:main} is, as one would hope, formally reduced to two classical inputs:  (i) descent for vector bundles, which guarantees that affine Nisnevich excision holds, and (ii) the solution to the Bass--Quillen conjecture, which guarantees that $\aone$-homotopy invariance holds.  Descent for vector bundles holds in great generality, and the restrictions on $k$ in Theorem \ref{thmintro:main} are imposed entirely to allow us to appeal to affirmations of the Bass--Quillen conjecture in the work of H. Lindel and D. Popescu \cite{Lindel,Popescu}.

Theorem \ref{thmintro:main} is not just of abstract significance.  When combined with techniques of obstruction theory, the representability results above have already been used to make significant progress on a number of classical questions about vector bundles on smooth affine varieties.  In brief, \cite{AsokFaselSpheres} establishes a conjecture of M. Nori on unimodular rows, \cite{AsokFaselThreefolds} establishes a cancellation theorem for rank $2$ vector bundles on smooth affine threefolds over algebraically closed fields, \cite{AsokFaselThreefolds} establishes the first non-trivial cases of a splitting conjecture of M.P. Murthy, and \cite{AsokFaselHopkins} describes new obstructions to existence of algebraic structures on topological vector bundles on smooth affine varieties.  Moreover, one can also establish existence results for vector bundles on motivic spheres over $\Spec \Z$ using a clutching construction \cite{AsokDoranFasel}.  We have focused this paper on representability results for vector bundles with the goal of keeping the exposition as simple as possible.

Furthermore, the techniques developed in this paper are also very useful.  In \cite{AffineRepresentabilityII}, we will use Theorem \ref{thmintro:main} (and some extensions) to establish affine representability results for principal $G$-bundles and various homogeneous spaces for linear algebraic groups $G$.  For example, we establish affine representability results for oriented and symplectic vector bundles in the same generality as Theorem~\ref{thmintro:main}, and similar representability results for more general reductive groups when the base is an infinite field.  Not unexpectedly, such generalizations require much more background from the theory of algebraic groups, and we have chosen to expose them separately to avoid encumbering the main arguments here with additional terminological baggage.

\subsubsection*{Overview of sections}
The beginning of each section contains more detailed information about its contents.  Section \ref{s:comparisonoftopologies} recalls Voevodsky's theory of cd-structures, which is a convenient language to study the various Grothendieck topologies that we consider.  The only prerequisites for this section are some knowledge about Grothendieck topologies and some basic algebraic geometry.  Section \ref{s:descent} studies fibrancy in the local homotopy theory of simplicial presheaves and thus requires familiarity with model categories and their localizations and local homotopy theory (\cite{Hirschhorn} is a standard reference for the former, and \cite[Chapters 4-5]{JardineLHT} provide a reference for the latter, though we caution the reader familiar with the local homotopy theory to pay careful attention to our definitions).  Section \ref{s:homotopylocalization} studies a Bousfield localization of the homotopy category of simplicial presheaves with respect to an ``interval"; this abstract setting was studied by Morel and Voevodsky in \cite{MV}, but can be read largely independently of their work.  The results of Sections \ref{s:descent} and \ref{s:homotopylocalization} are written to emphasize their largely formal nature.  Finally, Section \ref{s:representability} puts everything together and requires some familiarity with the definitions of \cite{MV}, but does not use any of the more difficult results of that paper.

\subsubsection*{Acknowledgements}
The authors would like to thank Fabien Morel for helpful discussions about the results of \cite{MField}, Marco Schlichting for helpful discussions about \cite{Schlichting}, and Oliver R\"ondigs for helpful comments on a preliminary draft of this work.  The authors would also like to heartily thank Brad Drew for a number of discussions that clarified some technical points and issues of presentation and Mike Hopkins for useful discussions and encouragement.  Finally, we thank the referees for their careful reading of the paper and a number of suggestions for improvement of the exposition.

\subsubsection*{Preliminaries/Notation}
Throughout this paper, we use the symbol $S$ for a quasi-compact, quasi-separated base scheme. We write $\Sm_S$ for the category of finitely presented smooth $S$-schemes, $\Sm_S^\sep\subset\Sm_S$ for the full subcategory of separated schemes (in the absolute sense), and $\Sm_S^\aff\subset\Sm_S^\sep$ for the full subcategory of affine schemes (again, in the absolute sense).

We use the standard definition of a Grothendieck topology in terms of sieves \cite[Exp.\ II, D\'efinition 1.1]{SGA41}.
If $\mathbf C$ is a category equipped with a Grothendieck topology $t$, we call a family of morphisms $\{U_i\to X\}_{i\in I}$ in $\mathbf C$ a \emph{$t$-cover} if it generates a $t$-covering sieve.

We define the Nisnevich topology on $\Sm_S$, $\Sm_S^\sep$, and $\Sm_S^\aff$ to be generated by finite families of \'etale maps $\{U_i \to X\}$ admitting a splitting sequence by finitely presented closed subschemes, in the sense of \cite[\S3, p. 97]{MV}; this definition is studied, for example, in \cite[\S1]{DAGXI}.  This definition is equivalent to the standard definition \cite[\S 3 Definition 1.2]{MV} if $S$ is Noetherian, by \cite[\S3 Lemma 1.5]{MV}. In fact, one can show that they are equivalent even if $S$ is not Noetherian, but we will not need this fact.

\section{Comparing topologies generated by cd-structures}
\label{s:comparisonoftopologies}
In this section, we recall Voevodsky's definition of a cd-structure on a category and the associated Grothendieck topology \cite[Definition 2.1]{VVcd}.  For the purposes of applications in this paper, it suffices to observe that Voevodsky's theory provides a convenient axiomatization of topologies similar to the Zariski or Nisnevich topology; see Example \ref{ex:cdstructures} for more details.  Our main goal is to establish various comparison results for topologies generated by different cd-structures on a given category.

The main result of this section is Proposition \ref{prop:affineNisnevich}, which provides a particularly simple description of the Nisnevich topology on smooth schemes in terms of a restricted class of ``affine" distinguished squares.  As a warm-up to the main result, we begin by establishing Proposition \ref{prop:affineZariski} which provides an analogous characterization of the Zariski topology in the spirit of our main results.  The key technical step in our proof of Proposition \ref{prop:affineNisnevich} is Lemma \ref{lem:etalenbd}, which provides a refinement result for Nisnevich neighborhoods akin to that used in the proof of Proposition \ref{prop:affineZariski}.  To streamline the proof of Lemma \ref{lem:etalenbd}, we have collected various results about henselization of pairs in Subsection \ref{ss:henselizationofpairs}.

\subsection{cd-structures: definitions and first comparisons}
\label{ss:cdstructures}
\begin{defn}
\label{defn:cdstructure}
Suppose ${\mathbf C}$ is a small category that has an initial object $\emptyset$.
\begin{enumerate}[noitemsep,topsep=1pt]
\item A cd-structure $P$ on ${\mathbf C}$ is a collection of commutative squares in $\mathbf{C}$ such that, if $Q \in P$, and $Q'$ is isomorphic to $Q$, then $Q' \in P$; the squares in $P$ will be called {\em $P$-squares}.
\item If $P$ is a cd-structure on $\mathbf{C}$, then the Grothendieck topology $t_P$ on ${\mathbf C}$ generated by $P$ is the coarsest topology such that:
\begin{itemize}[noitemsep,topsep=1pt]
	\item the empty sieve covers $\emptyset$;
	\item for every square $Q\in P$ of the form
	\begin{equation}
	\label{eqn:square}
	\xymatrix{
	W \ar[r]\ar[d] & V \ar[d]\\
	U \ar[r] & X,
	}
	\end{equation}
	the sieve on $X$ generated by $U\to X$ and $V\to X$ is a $t_P$-covering sieve.
\end{itemize}
\end{enumerate}
\end{defn}

\begin{ex}
	\label{ex:cdstructures}
We will consider the following examples of cd-structures.
\begin{enumerate}[noitemsep,topsep=1pt]
	\item The \emph{Zariski cd-structure} $\Zar$ on $\Sm_S$ consists of Cartesian squares
	\[
	\xymatrix{
	U\times_XV \ar[r] \ar[d] & V \ar[d] \\
	U \ar[r] & X
	}
	\]
	where $U,V\subset X$ are open subschemes such that $U\cup V=X$.
	\item The \emph{Nisnevich cd-structure} $\Nis$ on $\Sm_S$ consists of Cartesian squares
\[
\xymatrix{
U\times_XV \ar[r] \ar[d] & V \ar[d]^{\pi} \\
U \ar[r]_{j} & X
}
\]
where $j$ is an open immersion, $\pi$ is \'etale, and $\pi$ induces an isomorphism $V\times_XZ\cong Z$, where $Z$ is the reduced closed complement of $j$.
\item The \emph{restricted Nisnevich cd-structure} $\mathrm{RNis}$ on $\Sm_S^\sep$ has squares as in the Nisnevich cd-structure, where in addition $\pi$ is affine.
\item The \emph{affine Zariski cd-structure} $\mathrm{AffZar}$ on $\Sm_S^\aff$ consists of Cartesian squares
\[
\xymatrix{
\Spec(A_{fg}) \ar[r] \ar[d] & \Spec(A_g) \ar[d] \\
\Spec(A_f) \ar[r] & \Spec(A)
}
\]
where $f,g\in A$ generate the unit ideal.
\item
The \emph{affine Nisnevich cd-structure} $\mathrm{AffNis}$ on $\Sm_S^\aff$ consists of Cartesian squares
\[
\xymatrix{
\Spec(B_f) \ar[r] \ar[d] & \Spec(B) \ar[d]^{\pi} \\
\Spec(A_f) \ar[r] & \Spec(A)
}
\]
where $f\in A$, $\pi$ is \'etale and induces an isomorphism $A/f\cong B/f$.
\end{enumerate}
\end{ex}

Note that if $\pi: V \to X \leftarrow U : j$ gives rise to a Nisnevich square in $\Sm_S$, then $\pi$ induces an isomorphism $V\times_XZ\to Z$ for \emph{any} closed subscheme $Z\subset X$ complementary to $j$, by the topological invariance of the \'etale site \cite[18.1.2]{EGA44}. Since $U$ is quasi-compact, there always exists such a $Z$ which is finitely presented over $X$ \cite[2.6.1 (c)]{Thomason}.

The topology $t_{\Zar}$ on $\Sm_S$ generated by the Zariski cd-structure is the usual Zariski topology, since schemes in $\Sm_S$ are quasi-compact and quasi-separated.

The proof of \cite[\S 3 Proposition 1.4, p. 96]{MV} shows that the topology $t_{\Nis}$ on $\Sm_S$ generated by the Nisnevich cd-structure is the Nisnevich topology. The same proof works if $\Sm_S$ is replaced by $\Sm_S^\sep$, but it breaks down for $\Sm_S^\aff$, because an open subscheme of an affine scheme need not be affine. Our main result below is that $t_{\mathrm{AffNis}}$ nevertheless coincides with the Nisnevich topology on $\Sm_S^\aff$. Along the way, we will also show that $t_{\mathrm{AffZar}}$ coincides with the Zariski topology on $\Sm_S^\aff$ and that $t_{\mathrm{RNis}}$ coincides with the Nisnevich topology on $\Sm_S^\sep$.

\begin{prop}
	\label{prop:affineZariski}
	The topology $t_{\mathrm{AffZar}}$ on $\Sm_S^{\aff}$ coincides with the Zariski topology.
\end{prop}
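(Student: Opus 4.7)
The plan is to verify that the two topologies have the same covering sieves by comparing them on generators of each. One direction is immediate: every $\mathrm{AffZar}$-square with $(f,g)=(1)$ yields the standard Zariski cover $\{\Spec(A_f),\Spec(A_g)\to \Spec(A)\}$, so $t_{\mathrm{AffZar}} \subseteq t_{\Zar}$. The real content is the reverse inclusion.

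Since every object of $\Sm_S^{\aff}$ is quasi-compact, the Zariski topology on $\Sm_S^{\aff}$ is generated by the finite principal affine covers $\{\Spec(A_{f_i}) \to \Spec(A)\}_{i=1}^n$ with $(f_1,\dots,f_n)=A$. It therefore suffices to show that every such family generates a $t_{\mathrm{AffZar}}$-covering sieve, which I would do by induction on $n$. The case $n=0$ forces $A=0$, and the empty sieve covers $\emptyset$ by the first clause in the definition of the topology generated by a cd-structure; $n=1$ is trivial since $f_1$ is then a unit; and $n=2$ is exactly the content of an $\mathrm{AffZar}$-square.

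For the inductive step with $n\ge 3$, choose a B\'ezout identity $\sum_{i=1}^n a_i f_i = 1$ and set $h := \sum_{i=1}^{n-1} a_i f_i \in A$, so that $h + a_n f_n = 1$ and hence $(h,f_n) = A$. This produces a two-element $\mathrm{AffZar}$-cover $\{\Spec(A_h),\Spec(A_{f_n})\}$ of $\Spec(A)$. Localizing the B\'ezout identity at $h$ shows that $(f_1,\dots,f_{n-1})A_h = A_h$, so the inductive hypothesis, applied over $\Spec(A_h)\in\Sm_S^{\aff}$, produces a $t_{\mathrm{AffZar}}$-covering sieve generated by $\{\Spec(A_{hf_i})\to \Spec(A_h)\}_{i=1}^{n-1}$. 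Each map $\Spec(A_{hf_i})\to\Spec(A)$ factors canonically through $\Spec(A_{f_i})\to\Spec(A)$, so by transitivity of the topology $t_{\mathrm{AffZar}}$, the sieve on $\Spec(A)$ generated by $\{\Spec(A_{f_i})\}_{i=1}^{n}$ is $t_{\mathrm{AffZar}}$-covering.

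I do not expect any genuine obstacle here; the argument is essentially elementary. The only minor point that requires attention is that an $n$-element principal cover with $n\ge 3$ is not itself an $\mathrm{AffZar}$-square, and the trick of bundling $n-1$ of the generators into the single element $h=\sum_{i<n} a_i f_i$ is what lets one reduce to successive applications of the two-term case.
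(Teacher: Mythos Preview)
Your proof is correct and follows essentially the same approach as the paper: both argue by induction on the number of principal opens, bundling $f_1,\dots,f_{n-1}$ into a single element $g\in(f_1,\dots,f_{n-1})$ with $(g,f_n)=A$, then applying the inductive hypothesis on $\Spec(A_g)$. Your explicit choice $h=\sum_{i<n}a_if_i$ via the B\'ezout coefficients is exactly the paper's $g$, and your handling of the degenerate cases $n=0,1$ is slightly more explicit but otherwise the arguments are the same.
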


\begin{proof}
Set $t:= t_{\mathrm{AffZar}}$. By definition, $t$ is coarser than the Zariski topology.  Therefore, we have to show that every Zariski cover in $\Sm_S^{\aff}$ is a $t$-cover. A general Zariski cover of $X=\Spec(A)$ is refined by one of the form $\{X_{f_1},\dots,X_{f_n}\}$ where $(f_1,\dots,f_n)=A$. We prove that such a cover is a $t$-cover by induction on $n$. For $n\leq 2$, there is nothing to prove. Choose $g\in (f_1,\dotsc,f_{n-1})$ such that $(g,f_n)=A$. Then $\{X_g,X_{f_n}\}$ is a $t$-cover of $X$. By the induction hypothesis, $\{X_{gf_1},\dots,X_{gf_{n-1}}\}$ is a $t$-cover of $X_g$. Hence, $\{X_{gf_{1}},\dots, X_{gf_{n-1}}, X_{f_n}\}$ is a $t$-cover which refines the original cover.
\end{proof}

\begin{prop}
	\label{prop:restrictedNis}
	The topology $t_{\mathrm{RNis}}$ on $\Sm_S^\sep$ coincides with the Nisnevich topology.
\end{prop}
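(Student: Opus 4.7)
The inclusion $t_{\mathrm{RNis}} \subseteq t_{\Nis}$ is immediate from the definitions; since the topology $t_{\Nis}$ on $\Sm_S^\sep$ coincides with the Nisnevich topology (the proof recalled above for $\Sm_S$ goes through verbatim), the content of the proposition is the reverse inclusion. For this, it suffices to show that every Nisnevich distinguished square $(U,V) \to X$ in $\Sm_S^\sep$ generates a $t_{\mathrm{RNis}}$-covering sieve on $X$.

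The plan rests on the following elementary observation: because $X$ is absolutely separated, the diagonal $X \hookrightarrow X \times X$ is a closed immersion, so for any morphism $Y \to X$ with $Y$ affine and any affine open $W \subseteq X$, the fibre product $Y \times_X W$ is a closed subscheme of the affine scheme $Y \times W$, and is therefore affine; thus $Y \to X$ is itself affine. Given a Nisnevich distinguished square $(U,V) \to X$, pick a finite affine open cover $V = V_1 \cup \cdots \cup V_n$ (possible since $V$ is quasi-compact); then each composite $V_i \to X$ is affine \'etale. Since each $V_i \to X$ factors through $V \to X$, it suffices to prove that the sieve generated by $\{U \to X, V_1 \to X, \ldots, V_n \to X\}$ is a $t_{\mathrm{RNis}}$-covering sieve on $X$.

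I would prove this statement by induction on $n$, with the strengthened hypothesis that $V = V_1 \cup \cdots \cup V_n$ is an open cover such that each morphism $V_i \to X$ is affine \'etale (though the $V_i$ themselves need not be affine); this strengthening is what allows the induction to survive base change. For $n = 1$ the square $(U, V_1) \to X$ is itself RNis distinguished. For $n \geq 2$, let $\sigma \colon Z \to V$ be the section associated to the distinguished square ($Z = X \setminus U$), and set $\tilde Z := \bigcap_{i < n}(Z \setminus \sigma^{-1}(V_i))$, a reduced closed subscheme of $X$. Then $\sigma(\tilde Z) \subseteq V_n$, and the square $(X \setminus \tilde Z, V_n) \to X$ is RNis distinguished: $V_n \to X$ is affine \'etale, and the preimage of $\tilde Z$ in $V_n$ is $\sigma(\tilde Z)$, which maps isomorphically onto $\tilde Z$ by the restriction of $\pi$.

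Setting $X' := X \setminus \tilde Z$, the restricted square $(U, (V_1 \cup \cdots \cup V_{n-1}) \times_X X') \to X'$ is Nisnevich distinguished, since the section over $Z \cap X' = \bigcup_{i < n} \sigma^{-1}(V_i)$ lands in $V_1 \cup \cdots \cup V_{n-1}$. Each base change $V_i \times_X X' \to X'$ is still affine \'etale, so the inductive hypothesis applied on $X' \in \Sm_S^\sep$ yields that the sieve generated by $\{U, V_1 \times_X X', \ldots, V_{n-1} \times_X X'\}$ is $t_{\mathrm{RNis}}$-covering on $X'$. Combining this with the RNis cover $\{X', V_n\}$ of $X$ via the locality of covering sieves completes the induction. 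The main obstacle is conceptual rather than technical: one has to phrase the inductive hypothesis in terms of affine morphisms rather than affine schemes, since the base changes $V_i \times_X X'$ appearing at the next step need not be affine as schemes, only as morphisms to $X'$.
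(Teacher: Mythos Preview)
Your argument is correct, and it takes a genuinely different route from the paper's.

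The paper proceeds in two separate stages. First it shows that the Zariski topology on $\Sm_S^\sep$ is contained in $t_{\mathrm{RNis}}$: any Zariski cover is refined by a finite affine open cover $\{U_1,\dots,U_n\}$, and an induction on $n$ using the RNis square $(U_1\cup\dotsb\cup U_{n-1},\,U_n)\to X$ (with $U_n\hookrightarrow X$ affine because $X$ is separated) does the job. Second, given a Nisnevich square $(U,V)\to X$, the paper covers $V$ by affine opens $V_i$, takes the \emph{open images} $X_i=\pi(V_i)\subset X$, observes that $\{U,X_i\}$ is a Zariski cover (hence already a $t_{\mathrm{RNis}}$-cover by step one), and then notes that on each $X_i$ the pair $\{U\cap X_i,\,V_i\}$ is an RNis square.

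Your approach is a single direct induction that never isolates the Zariski case. You peel off $V_n$ by carving out the closed set $\tilde Z\subset Z$ on which the section $\sigma$ lands outside $V_1\cup\dotsb\cup V_{n-1}$, obtain the RNis square $(X\setminus\tilde Z,\,V_n)\to X$, and recurse on $X'=X\setminus\tilde Z$ with the remaining $V_i\times_X X'$. The strengthening of the inductive hypothesis to ``$V_i\to X$ affine'' rather than ``$V_i$ affine'' is exactly what is needed to keep the induction going, as you note.

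What each buys: the paper's two-step argument is more modular and, as a byproduct, records the fact $t_{\Zar}\subset t_{\mathrm{RNis}}$ explicitly; it also avoids any analysis of the section $\sigma$, using instead only that \'etale maps are open. Your argument is more self-contained and arguably closer in spirit to the later proof of Proposition~\ref{prop:affineNisnevich}, where one similarly peels off pieces of a closed set by successively refining \'etale neighborhoods.
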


\begin{proof}
Set $t := t_{\mathrm{RNis}}$. By definition, $t$ is coarser than the Nisnevich topology. We first show that $t$ is finer than the Zariski topology. Any Zariski cover of $X\in\Sm_S^\sep$ is refined by an affine open cover $\{U_1,\dots,U_n\}$. We prove that this is a $t$-cover by induction on $n$. If $n\leq 2$, there is nothing to prove. Let $Y=U_1\cup\dotsb\cup U_{n-1}$. Then $\{Y,U_n\}$ is a $t$-cover of $X$, since $X$ is separated. By the induction hypothesis, $\{U_1,\dots,U_{n-1}\}$ is a $t$-cover of $Y$. Thus, $\{U_1,\dots,U_n\}$ is a $t$-cover of $X$, as was to be shown.
	
	We now prove that $t$ is finer than the Nisnevich topology. Since the latter is generated by the Nisnevich cd-structure, it suffices to show that, for every Nisnevich square~\eqref{eqn:square}, $\mathfrak U=\{j,\pi\}$ is a $t$-cover of $X$. Let $\{V_i\}_{i\in I}$ be an affine open cover of $V$, and let $X_i=\pi(V_i)\subset X$. Since $X$ is separated, the induced map $V_i\to X_i$ is affine. Then $\{U,X_i\}_{i\in I}$ is a Zariski cover of $X$, hence a $t$-cover. The restriction of $\mathfrak U$ to $U$ is a cover for any topology, and the restriction of $\mathfrak U$ to $X_i$ is refined by the $t$-cover $\{U\cap X_i\to X_i, V_i\to X_i\}$. Hence, $\mathfrak U$ is a $t$-cover.
\end{proof}

\begin{cor}
	\label{cor:restrictedNis}
	For every restricted Nisnevich square $Q$ as in \eqref{eqn:square} with $X\in\Sm_S^{\aff}$, let $\mathfrak U_Q\subset \Sm_S^\aff/X$ be the Nisnevich cover consisting of $V\to X$ and all affine schemes mapping to $U$. Then the covers $\mathfrak U_Q$, together with the empty cover of $\emptyset$, generate the Nisnevich topology on $\Sm_S^\aff$.
\end{cor}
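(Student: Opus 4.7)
The plan is to identify the topology $t'$ generated on $\Sm_S^\aff$ by the covers $\mathfrak U_Q$, together with the empty cover of $\emptyset$, with the Nisnevich topology. One direction is straightforward: for each restricted Nisnevich square $Q$ over an affine $X$, choosing any affine open cover $\{U_\alpha\}$ of $U$ by opens of the ambient affine $X$ yields a finite Nisnevich cover $\{V\to X\}\cup\{U_\alpha\hookrightarrow X\}$ contained in $\mathfrak U_Q$, so $t'\subseteq t_{\mathrm{Nis}}$.

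For the reverse inclusion, I would first note that the Zariski topology is contained in $t'$: any affine Zariski square $\{\Spec A_f,\Spec A_g\}$ with $(f,g)=A$ is itself a restricted Nisnevich square (taking $U=\Spec A_f$ and $V=\Spec A_g$, since $V(f)\subseteq\Spec A_g$), and its cover is contained in the associated $\mathfrak U_Q$; combined with Proposition~\ref{prop:affineZariski}, this gives Zariski $\subseteq t'$ on $\Sm_S^\aff$.

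I would then prove, by induction on the length $n$ of a splitting sequence $\emptyset=Z_n\subsetneq\cdots\subsetneq Z_0=X$, that any finite family $\{f_i\colon Y_i\to X\}$ of \'etale morphisms admitting such a splitting sequence generates a $t'$-covering sieve on $X\in\Sm_S^\aff$. It is convenient here to allow the $Y_i$ to be \'etale but not necessarily affine, since the induction step involves base changes that can destroy affineness. The base case $n=0$ is given by the empty cover. In the inductive step, pick a section $s\colon Z_{n-1}\to Y_k$ and form the standard Nisnevich distinguished square $(U,V)$ with $U=X\setminus Z_{n-1}$ and $V=Y_k\setminus Z'$, where $Z'\subset Y_k\times_X Z_{n-1}$ is the open-and-closed complement of $s(Z_{n-1})$. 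Since $V$ need not be affine, I would mimic the proof of Proposition~\ref{prop:restrictedNis}: cover $V$ by affine opens $V_\alpha$, set $T_\alpha=V_\alpha\times_X Z_{n-1}$, cover each $T_\alpha$ by principal affine opens $T_{\alpha,\beta}$ of the affine closed subscheme $Z_{n-1}\subset X$, and lift these to principal affine opens $X_{\alpha,\beta}\subset X$ with $Z_{n-1}\cap X_{\alpha,\beta}=T_{\alpha,\beta}$. Together with an affine open cover of $U$, the $X_{\alpha,\beta}$ constitute an affine Zariski cover of $X$ (hence $t'$-covering by the previous paragraph), and over each $X_{\alpha,\beta}$ the base-changed square $(U\cap X_{\alpha,\beta},V_\alpha\times_X X_{\alpha,\beta})$ is now a genuine restricted Nisnevich square with affine sides, so $\mathfrak U_{Q_{\alpha,\beta}}$ is $t'$-covering.

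Composing via transitivity produces a $t'$-covering sieve on $X$ whose elements factor either through $Y_k$ or through some affine scheme mapping to $U$. A further application of transitivity then reduces the assertion on the ``$U$ side'' to the induction hypothesis: for an affine $g\colon W\to U$, the base change $\{Y_i\times_X W\to W\}_{i<n}$ is a family of \'etale maps on the affine $W$ admitting a splitting sequence of length $n-1$. The main obstacle, I expect, is precisely the geometric bookkeeping in the inductive step --- in particular, lifting principal affine opens of $Z_{n-1}$ to principal affine opens of $X$ that produce restricted Nisnevich squares after base change --- together with the care needed to formulate the induction hypothesis so that it applies after pulling back along the (possibly non-affine) open immersion $U\hookrightarrow X$.
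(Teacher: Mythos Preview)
Your argument is essentially correct and reaches the same conclusion, but by a different route than the paper. The paper's proof is short and formal: it defines a Grothendieck topology $J$ on $\Sm_S^\sep$ by declaring a sieve $R$ on $X$ to be $J$-covering exactly when $f^*(R)\cap\Sm_S^\aff$ is $t'$-covering for every affine $f\colon Y\to X$. One then checks that every restricted Nisnevich square generates a $J$-cover, so Proposition~\ref{prop:restrictedNis} immediately gives $J\supseteq$ Nisnevich on $\Sm_S^\sep$; unwinding for affine $X$ finishes. In contrast, you carry out a direct induction on the length of a splitting sequence, refining the \'etale piece $V$ to restricted Nisnevich squares over principal opens of $X$ and invoking the already-established inclusion Zariski $\subseteq t'$. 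This is more hands-on but entirely self-contained; the geometric bookkeeping you flag (lifting principal opens of $Z_{n-1}$ to $X$, checking $V_\alpha\times_XX_{\alpha,\beta}$ is affine and isomorphic over $T_{\alpha,\beta}$) all goes through as you indicate. Two small slips to clean up: the section $s$ over $Z_{n-1}$ may land in several $Y_k$ on different components, so either decompose $Z_{n-1}$ or build $V$ as a disjoint union of pieces; and the index ``$i<n$'' in the inductive step should range over all $i$ (the length $n$ refers to the filtration, not the number of maps). The paper's approach buys brevity and reuses Proposition~\ref{prop:restrictedNis} verbatim; yours makes the refinement process explicit and avoids introducing the auxiliary topology $J$.
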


\begin{proof}
	Let $t$ be the topology generated by such covers. We must show that $t$ is finer than the Nisnevich topology. For every $X\in\Sm_S^\sep$, let $J(X)$ be the collection of sieves $R$ on $X$ such that, for every $f\colon Y\to X$ with $Y\in\Sm_S^\aff$, $f^*(R)\cap \Sm_S^\aff$ is a $t$-covering sieve of $Y$. It is straightforward to verify that $J$ is a Grothendieck topology on $\Sm_S^\sep$ (in fact, it is the finest topology for which the inclusion $(\Sm_S^{\aff},t)\subset (\Sm_S^\sep,J)$ is cocontinuous, cf.\ \cite[Exp.\ III, 3.7]{SGA41}). By definition of $t$, the covering sieves in $J$ contain all sieves generated by restricted Nisnevich squares, as well as the empty sieve on $\emptyset$. Hence, by Proposition~\ref{prop:restrictedNis}, $J$ is finer than the Nisnevich topology. In particular, if $R$ is a Nisnevich covering sieve of $X$ in $\Sm_S^\aff$ and $\hat R$ is the sieve in $\Sm_S^\sep$ generated by $R$, then $\hat R$ is a $J$-covering sieve. By definition of $J$, this implies that $R=\hat R\cap \Sm_S^\aff$ is a $t$-covering sieve of $X$, as desired.
\end{proof}

\subsection{Preliminaries on henselization of pairs}
\label{ss:henselizationofpairs}
Next, we would like to compare the affine Nisnevich cd-structure with the Nisnevich topology on $\Sm_S^{\aff}$.  To do this, we recall some facts about henselization of closed subschemes and henselian pairs. We will only consider the affine case: a \emph{pair} $(X,Z)$ is an affine scheme $X$ together with a closed subscheme $Z$. A morphism of pairs $(Y,W)\to (X,Z)$ is a morphism $Y\to X$ such that the composite $W\subset Y\to X$ factors through $Z$. Recall the definition of a henselian pair from, e.g., \cite[\href{http://stacks.math.columbia.edu/tag/09XD}{Tag 09XD Definition 15.7.1}]{stacks-project}. The following lemma summarizes some properties of henselian pairs.

\begin{lem}
\label{lem:henselizationproperties}
The following statements are true.
\begin{itemize}[noitemsep,topsep=1pt]
\item[(i)] A pair $(X,Z)$ is henselian if and only if, given any affine \'etale map $\pi: V \to X$, any map $Z \to V$ over $X$ extends to a section of $\pi$.
\item[(ii)] If $(X,Z)$ is a henselian pair and if $X' \to X$ is an integral morphism, then the pair $(X', Z \times_X X')$ is also a henselian pair.
\item[(iii)] If $(X,Z)$ and $(Z,W)$ are both henselian pairs, then $(X,W)$ is a henselian pair.
\end{itemize}
\end{lem}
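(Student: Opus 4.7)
The plan is to handle the three parts in order, with part~(i) serving as the main technical tool for parts~(ii) and~(iii). For part~(i), the approach is to translate the geometric lifting condition into its ring-theoretic counterpart: writing $X = \Spec A$ and $Z = V(I)$, an affine \'etale map $V \to X$ corresponds to an \'etale $A$-algebra $B$, and a map $Z \to V$ over $X$ corresponds to an $A$-algebra map $B \to A/I$. The lifting condition then asserts that every such $B \to A/I$ lifts to an $A$-algebra map $B \to A$, which is one of the standard equivalent characterizations of a henselian pair; I will simply invoke it from the Stacks project.

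For part~(ii), the plan is to reduce to the case where $X' \to X$ is finite and then apply~(i). Given an affine \'etale $\pi\colon V' \to X'$ and a map $\sigma\colon Z \times_X X' \to V'$ over $X'$, I would write $X' = \lim_\alpha X_\alpha$ as a cofiltered limit of finite $X$-schemes, which is possible because any integral morphism is a cofiltered limit of finite ones. Standard finite-presentation arguments descend both $V'$ and $\sigma$ to an affine \'etale $V_\alpha \to X_\alpha$ with a compatible map from $Z \times_X X_\alpha$, for some sufficiently large $\alpha$. The classical finite case, namely that $(X_\alpha, Z \times_X X_\alpha)$ is henselian whenever $X_\alpha \to X$ is finite and $(X,Z)$ is henselian, then provides via~(i) a section of $V_\alpha \to X_\alpha$ extending the descended~$\sigma$; pulling back along $X' \to X_\alpha$ produces the required section of $\pi$. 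This descent step is the principal subtlety of the whole lemma: one must simultaneously carry both the \'etale scheme and the compatible section through the limit presentation, a routine but slightly delicate move and the only nontrivial ingredient beyond the explicit characterization in~(i).

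For part~(iii), I would apply the characterization from~(i) in two successive stages. Given an affine \'etale $\pi\colon V \to X$ and a map $\sigma\colon W \to V$ over $X$, the inclusion $W \subseteq Z$ implies that $\sigma$ factors through $V_Z := V \times_X Z$, which is affine \'etale over $Z$. Applying~(i) to the henselian pair $(Z, W)$ extends this factored map to a section $\tau\colon Z \to V_Z$ of $V_Z \to Z$; composing with the projection $V_Z \to V$ yields a map $Z \to V$ over $X$. A second application of~(i), now to the henselian pair $(X, Z)$, extends this morphism to a section $s\colon X \to V$ of $\pi$, and by construction $s$ restricts to $\sigma$ on $W$, completing the argument.
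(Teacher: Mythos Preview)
Your proposal is correct and, for parts~(i) and~(iii), essentially identical to the paper's proof: the paper cites the Stacks project for~(i) and proves~(iii) by exactly the two-stage application of~(i) you describe, factoring $W\to V$ through $Z\times_X V$, lifting first to a section over~$Z$, then to a section over~$X$.

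The only difference is in part~(ii). The paper disposes of it in one line by citing the Stacks project (Tag 09XD, Lemma 15.7.9), which already handles arbitrary integral morphisms. Your route---writing the integral morphism as a cofiltered limit of finite morphisms, descending the affine \'etale scheme and the section through the limit, and invoking the finite case---is a correct proof of that same Stacks lemma, but it is more work than necessary here. What your approach buys is self-containment (you actually indicate why the integral case follows from the finite one); what the paper's approach buys is brevity. Either is fine, and your identification of the descent of both the \'etale scheme and the section as ``the principal subtlety'' is accurate, though in the end it is a standard finite-presentation argument.
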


\begin{proof}
The first statement is contained in {\cite[\href{http://stacks.math.columbia.edu/tag/09XD}{Tag 09XD Lemma 15.7.7}]{stacks-project}}. The second is \cite[\href{http://stacks.math.columbia.edu/tag/09XD}{Tag 09XD Lemma 15.7.9}]{stacks-project}.  For the third statement, given $\pi:  V \to X$ an \'etale affine morphism, set $\pi': Z \times_X V \to Z$ and consider the diagram
\[
\xymatrix{
& Z \times_X V \ar[d]^{\pi'}\ar[r] & V\ar[d]^{\pi} \\
W \ar[r]& Z \ar[r] & X.
}
\]
A morphism $W \to V$ over $X$ factors through a morphism $W \to Z \times_X V$ over $Z$.  Because $(Z,W)$ is a henselian pair, the forward implication in (i) implies that the morphism $W \to Z \times_X V$ extends to a section of $\pi'$ and this section of $\pi'$ yields a morphism $Z \to V$ over $X$.  Since $(X,Z)$ is a henselian pair, the forward implication in (i) again yields a section of $\pi$.  Since $V \to X$ was arbitrary, the reverse implication of (i) then allows us to conclude that $(X,W)$ is also a Henselian pair.
\end{proof}

The forgetful functor from the category of henselian pairs to the category of pairs has a right adjoint sending a pair $(X,Z)$ to its henselization $(X^h_Z,Z)$.  The details of the construction will be important below.  Suppose $X$ is an affine scheme and $Z \subset X$ is a closed subscheme.
A \emph{Nisnevich neighborhood} of $Z$ in $X$ is a pair $(V,\pi)$ where $V$ is an affine scheme and $\pi: V \to X$ is an \'etale morphism inducing an isomorphism $V\times_XZ \isomt Z$. Nisnevich neighborhoods of $(X,Z)$ form a category $\mathrm{Neib}(X,Z)$, where a morphism is a commuting triangle.
The category $\mathrm{Neib}(X,Z)$ is cofiltered since it has finite limits, and it is essentially small since affine \'etale morphisms are finitely presented. By the proof of \cite[\href{http://stacks.math.columbia.edu/tag/09XD}{Tag 09XD Lemma 15.7.13}]{stacks-project}, the affine scheme $X^h_Z$ is described explicitly as the limit over the category $\mathrm{Neib}(X,Z)$ of the diagram $(V,\pi)\mapsto V$.   For completeness, we record the following formal consequence of the adjunction just described.

\begin{lem}
\label{lem:universalproperty}
If $(X,Z)$ is a pair, then the henselian pair $(X^h_Z,Z)$ is universal among henselian pairs over $(X,Z)$: any morphism of pairs from a henselian pair $(Y,W)$ to $(X,Z)$ factors uniquely through a morphism of pairs $(Y,W) \to (X^h_Z,Z)$.
\end{lem}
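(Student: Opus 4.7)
My plan is to unpack the explicit limit description $X^h_Z = \lim_{(V,\pi) \in \mathrm{Neib}(X,Z)} V$ recalled just before the statement and to build the factorization one Nisnevich neighborhood at a time, with the main technical point being uniqueness of section extensions in a henselian pair, which I will extract from Lemma~\ref{lem:henselizationproperties}(i). Given a morphism of pairs $f : (Y,W) \to (X,Z)$ with $(Y,W)$ henselian, specifying a morphism $Y \to X^h_Z$ over $X$ amounts to specifying, for each $(V, \pi) \in \mathrm{Neib}(X,Z)$, a morphism $f_V : Y \to V$ over $X$, coherently with respect to morphisms in $\mathrm{Neib}(X,Z)$.

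To construct $f_V$, I would base-change $\pi$ along $f$ to produce an affine \'etale map $\pi_Y : V \times_X Y \to Y$. The morphism of pairs supplies a factorization $W \to Z \cong V \times_X Z \hookrightarrow V$ of the restriction of $f$ to $W$, which, paired with $W \hookrightarrow Y$, assembles into a canonical section $s_W$ of $\pi_Y$ over $W$. Applying Lemma~\ref{lem:henselizationproperties}(i) to $\pi_Y$ extends $s_W$ to a section $s : Y \to V \times_X Y$ of $\pi_Y$, and I take $f_V$ to be the composite $Y \xrightarrow{s} V \times_X Y \to V$.

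The hard part will be to verify that such a section extension is unique, which is needed both to make the family $\{f_V\}$ functorial in $(V,\pi)$ and to obtain the uniqueness clause in the statement. To handle this I would argue as follows: if $s, s' : Y \to V \times_X Y$ are two sections of $\pi_Y$ agreeing on $W$, then since $\pi_Y$ is separated and \'etale its diagonal is a clopen immersion, so the equalizer of $s$ and $s'$ is a clopen subscheme $U \subseteq Y$ containing $W$. Such a $U$ is affine, so $U \hookrightarrow Y$ is an affine \'etale map, and the tautological inclusion $W \hookrightarrow U$ is a morphism over $Y$; Lemma~\ref{lem:henselizationproperties}(i) then produces a section of $U \hookrightarrow Y$, which forces $U = Y$ since $U \hookrightarrow Y$ is a monomorphism, giving $s = s'$.

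With uniqueness in hand, compatibility of $\{f_V\}$ with morphisms $(V',\pi') \to (V,\pi)$ in $\mathrm{Neib}(X,Z)$ is automatic: both $f_V$ and the composite of $f_{V'}$ with the given morphism $V' \to V$ are sections of $\pi_Y$ extending $s_W$ over $W$, so they coincide. The $f_V$ therefore assemble into the desired morphism $Y \to X^h_Z$ over $X$, and its restriction to $W$ is the given map $W \to Z$ by construction of $s_W$, so it is a morphism of pairs. Uniqueness of this factorization is a last invocation of the same section-uniqueness argument: any competing factorization must, for each $(V,\pi)$, yield a section of $\pi_Y$ extending $s_W$, and these are now pinned down.
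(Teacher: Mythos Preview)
Your argument is correct. The paper, however, does not actually prove this lemma: immediately before stating it, the text asserts that the forgetful functor from henselian pairs to pairs has a right adjoint (henselization), citing the Stacks Project for the construction, and then simply ``records'' the lemma as a formal consequence of that adjunction. In other words, the paper treats the universal property as a restatement of the adjunction already in hand.

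What you have done is different in kind: you reconstruct the universal property directly from the explicit description $X^h_Z=\lim_{(V,\pi)}V$ together with the section-extension characterization of henselian pairs in Lemma~\ref{lem:henselizationproperties}(i). The key step you isolate---uniqueness of the extended section, proved via the clopen equalizer and a second application of Lemma~\ref{lem:henselizationproperties}(i)---is exactly what is needed to make the family $\{f_V\}$ coherent and the factorization unique. This is more work than the paper undertakes, but it is self-contained: it explains \emph{why} the limit of Nisnevich neighborhoods satisfies the universal property rather than importing that fact. Both approaches are valid; yours trades a black-box citation for an explicit verification.
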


\begin{lem}
\label{lem:iteratedhenselization}
	If $X$ is an affine scheme and $W\subset Z\subset X$ is a chain of closed subschemes, then the canonical map
	\[
	(X^h_Z)^h_W \longrightarrow X^h_W,
	\]
	arising from the universal property of the right-hand side, is an isomorphism.
\end{lem}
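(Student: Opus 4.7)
The plan is to prove the lemma by checking that $(X^h_Z)^h_W$ satisfies the universal property of $X^h_W$ stated in Lemma \ref{lem:universalproperty}; uniqueness of representing objects will then force the canonical comparison map to be an isomorphism.

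First I would check that $((X^h_Z)^h_W, W)$ is itself a henselian pair equipped with a canonical morphism of pairs to $(X, W)$. This is essentially a matter of unwinding definitions: the henselization map $X^h_Z \to X$ restricts to the identity on the closed subscheme $Z$, so $W \subset Z$ sits inside $X^h_Z$ as a closed subscheme and the composite $(X^h_Z)^h_W \to X^h_Z \to X$ sends $W$ into $W$. Both pieces are henselian pairs by construction, and the claim follows.

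Next, for universality, I would take any henselian pair $(Y,V)$ together with a morphism of pairs $(Y,V) \to (X,W)$ and factor it through $((X^h_Z)^h_W, W)$ in two steps. Since $W \subset Z$, the same morphism is automatically a morphism of pairs $(Y,V) \to (X,Z)$, so by Lemma \ref{lem:universalproperty} applied to $(X,Z)$ it factors uniquely as $(Y,V) \to (X^h_Z, Z) \to (X,Z)$. The one subtle — and really the only non-formal — point is to verify that the resulting map $V \to X^h_Z$ in fact lands in $W$, not merely in $Z$: the composite $V \to X^h_Z \to X$ equals the original map $V \to W \hookrightarrow X$, while $Z \hookrightarrow X$ is a monomorphism, so the factorization $V \to Z$ must further factor through $W$. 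This upgrades the data to a morphism of pairs $(Y,V) \to (X^h_Z, W)$ out of a henselian pair, and applying Lemma \ref{lem:universalproperty} once more, this factors uniquely through $((X^h_Z)^h_W, W)$.

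For uniqueness of the composite factorization, I would chain the two uniqueness clauses: any factorization $(Y,V) \to ((X^h_Z)^h_W, W) \to (X,W)$ induces by post-composition a factorization $(Y,V) \to (X^h_Z, Z) \to (X,Z)$, which is pinned down by the universal property of $X^h_Z$, and then the universal property of $(X^h_Z)^h_W$ determines the original map. Thus $((X^h_Z)^h_W, W)$ represents the same functor as $(X^h_W, W)$, and the canonical map is an isomorphism. I do not foresee serious obstacles beyond the monomorphism observation highlighted above; alternatively, one could give a more hands-on proof using the description of henselizations as cofiltered limits over $\mathrm{Neib}(-,-)$ recalled before Lemma \ref{lem:universalproperty} and reindexing the iterated limit, but the universal-property route is cleaner.
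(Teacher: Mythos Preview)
Your proposal is correct and follows exactly the approach the paper takes: the paper's proof is the single sentence ``The pair $((X^h_Z)^h_W,W)$ satisfies the universal property for the henselization of $X$ at $W$,'' and you have simply written out the verification of that universal property in detail.
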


\begin{proof}
The pair $((X^h_Z)^h_W,W)$ satisfies the universal property for the henselization of $X$ at $W$.
\end{proof}

\subsection{Refining Nisnevich neighborhoods}
\label{ss:nisnevichrefinement}
We now prove that the cd-structure $t_\mathrm{AffNis}$ on $\Sm_S^\aff$ generates the Nisnevich topology.  To see what the difficulty is, consider a Nisnevich square \eqref{eqn:square} where $X$ and $V$ are affine and $U$ is the union of two affines $U_1$ and $U_2$. Then $V$, $U_1$, and $U_2$ form a Nisnevich cover of $X$ in $\Sm_S^\aff$. However, $\{V,U_1\}$ and $\{V,U_2\}$ need not be Nisnevich squares over $X$, because $V\to X$ is only required to be an isomorphism over the complement of $U$.
The next result shows that $V\to X$ can be refined by a composition of affine \'etale maps $V_2\to V_1\to X$ such that $\{V_1,U_1\}$ is a Nisnevich square over $X$ and $\{V_2,U_2\times_XV_1\}$ is a Nisnevich square over $V_1$. Thus, $V_2$, $U_1$, and $U_2\times_XV_1$ form a $t_\mathrm{AffNis}$-cover of $X$ that refines the given Nisnevich cover.

\begin{lem}
\label{lem:etalenbd}
If $X$ is an affine scheme, $Z_1,Z_2\subset X$ are closed subschemes, and $\pi\colon V\to X$ is an affine \'etale map that is an isomorphism over $Z_1\cap Z_2$, then there exists a commutative square
	\[
	\xymatrix{
	V_2 \ar[r]^{\pi_2} \ar[d] & V_1 \ar[d]^{\pi_1} \\
	V \ar[r]^{\pi} & X,
	}
	\]
	of affine schemes with the following properties:
	\begin{itemize}[noitemsep,topsep=1pt]
		\item[(a)] the morphism $\pi_1$ is \'etale and is an isomorphism over $Z_1$;
		\item[(b)] the morphism $\pi_2$ is \'etale and is an isomorphism over $Z_2\times_XV_1$.
	\end{itemize}
\end{lem}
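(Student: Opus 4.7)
The plan is to construct $V_1$ as a Nisnevich neighborhood of $Z_1$ in $X$ extracted from the henselization $X^h_{Z_1}$ (so that (a) holds by construction), and then to build $V_2 \to V_1$ as a Nisnevich neighborhood of $Z_2\times_X V_1$ equipped with a compatible map to $V$. The main technical input is the henselianity of $(X^h_{Z_1}, Z_1)$ combined with the observation that henselianity is preserved under integral base change (Lemma \ref{lem:henselizationproperties}(ii)); since closed immersions are integral, this supplies a second henselian pair that controls the extension of sections.

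First, applying Lemma \ref{lem:henselizationproperties}(ii) to $(X^h_{Z_1}, Z_1)$ along the closed immersion $Z_2\times_X X^h_{Z_1} \hookrightarrow X^h_{Z_1}$ shows that $(Z_2\times_X X^h_{Z_1},\, Z_1\cap Z_2)$ is henselian, where $Z_1\cap Z_2$ is identified with $Z_1 \times_{X^h_{Z_1}} (Z_2\times_X X^h_{Z_1})$. The hypothesis that $\pi$ is an isomorphism over $Z_1\cap Z_2$ provides a canonical section over $Z_1\cap Z_2$ of the affine étale morphism $V\times_X (Z_2\times_X X^h_{Z_1}) \to Z_2\times_X X^h_{Z_1}$. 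Lemma \ref{lem:henselizationproperties}(i) then upgrades this to a full section, equivalently a morphism $\tau\colon Z_2\times_X X^h_{Z_1} \to V$ over $X$.

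Second, since $X^h_{Z_1}$ is the cofiltered limit of affine étale Nisnevich neighborhoods $X_1\to X$ of $Z_1$, we have $Z_2\times_X X^h_{Z_1} = \lim_{X_1} Z_2\times_X X_1$; because $\pi$ is of finite presentation, $\tau$ descends to a morphism $\tau_0\colon Z_2\times_X X_1 \to V$ for some particular $X_1$. Setting $V_1 := X_1$, the structure map $\pi_1\colon V_1\to X$ is affine étale and an isomorphism over $Z_1$, yielding (a). Viewed through the universal property of the fibre product, $\tau_0$ corresponds to a section $s$ of the affine étale morphism $V\times_X V_1 \to V_1$ over the closed subscheme $Z_2\times_X V_1 \subset V_1$.

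Third, because étale morphisms are separated, sections of étale morphisms are clopen immersions. Applying Lemma \ref{lem:henselizationproperties}(i) again, now to the henselian pair $\bigl((V_1)^h_{Z_2\times_X V_1},\, Z_2\times_X V_1\bigr)$, the section $s$ extends to a full section of $V\times_X (V_1)^h_{Z_2\times_X V_1} \to (V_1)^h_{Z_2\times_X V_1}$, giving a clopen decomposition of the source. Approximating this splitting back to a finite stage via finite presentation of $\pi$ then produces an affine étale Nisnevich neighborhood $V_2\to V_1$ of $Z_2\times_X V_1$ together with a clopen decomposition $V\times_X V_2 = V_2 \sqcup V'$. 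The inclusion of the first summand gives the required morphism $V_2\to V$ over $X$, and commutativity of the resulting square follows from the universal property of $V\times_X V_1$. The main obstacle is the section-extension argument in the first step, which hinges essentially on Lemma \ref{lem:henselizationproperties}(ii) to upgrade the henselianity of $(X^h_{Z_1}, Z_1)$ through a closed-immersion base change; without this, one cannot extend the given section from $Z_1\cap Z_2$ to all of $Z_2\times_X X^h_{Z_1}$.
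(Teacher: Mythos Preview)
Your argument is correct, and it reaches the same conclusion as the paper's proof while organizing the henselian-pair input differently. The paper's route is to first establish the isomorphism
\[
X^h_{Z_1\cap Z_2}\;\cong\;(X^h_{Z_1})^h_{Z_2\times_X X^h_{Z_1}}
\]
using Lemma~\ref{lem:iteratedhenselization} together with parts (ii) \emph{and} (iii) of Lemma~\ref{lem:henselizationproperties}; it then uses the canonical projection $X^h_{Z_1\cap Z_2}\to V$ and approximates through the resulting double inverse limit in one pass, producing $V'\in\mathrm{Neib}(X^h_{Z_1},Z_2\times_XX^h_{Z_1})$ and then descending $V'\to X^h_{Z_1}$ to $V_2\to V_1$. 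Your route bypasses the isomorphism of henselizations entirely: you apply Lemma~\ref{lem:henselizationproperties}(i) twice, first to the henselian pair $(Z_2\times_XX^h_{Z_1},Z_1\cap Z_2)$ (obtained via part (ii)) to produce a section over $Z_2\times_X X^h_{Z_1}$, descend it to some $V_1$, and then repeat at the level of $(V_1)^h_{Z_2\times_XV_1}$. This is slightly more elementary---you never invoke Lemma~\ref{lem:iteratedhenselization} or part (iii)---at the price of running the extend-then-approximate cycle twice rather than once. Both arguments rest on the same essential point, namely that closed-immersion base change (part (ii)) makes $(Z_2\times_X X^h_{Z_1},\,Z_1\cap Z_2)$ henselian.

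Two minor remarks on presentation. The phrase ``\'etale morphisms are separated'' is not literally true; what you need is that \emph{affine} \'etale morphisms are separated (so sections are closed) and unramified (so sections are open), whence clopen. More to the point, the clopen decomposition in your third step is a detour: once you have the section $(V_1)^h_{Z_2\times_XV_1}\to V\times_X(V_1)^h_{Z_2\times_XV_1}$, you already have an $X$-morphism $(V_1)^h_{Z_2\times_XV_1}\to V$, and cocompactness of $V$ in affine $X$-schemes immediately descends it to some $V_2\in\mathrm{Neib}(V_1,Z_2\times_XV_1)$, giving the desired square directly.
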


\begin{proof}
By two applications of Lemma~\ref{lem:iteratedhenselization}, the canonical map
\[
((X^h_{Z_1})^h_{Z_2 \times_X X^h_{Z_1}})^h_{Z_1\cap Z_2} \longrightarrow X^h_{Z_1\cap Z_2}
\]
is an isomorphism. Thus, there is a canonical map
\begin{equation}
	\label{eqn:hensel}
X^h_{Z_1\cap Z_2} \longrightarrow (X^h_{Z_1})^h_{Z_2 \times_X X^h_{Z_1}}
\end{equation}
exhibiting the left-hand side as the henselization of the right-hand side at $Z_1\cap Z_2$.
We claim that the map~\eqref{eqn:hensel} is an isomorphism. By the universal property of henselization (i.e., Lemma \ref{lem:universalproperty}), this claim is equivalent to the claim that the pair
\[
((X^h_{Z_1})^h_{Z_2 \times_X X^h_{Z_1}}, Z_1\cap Z_2)
\]
is henselian. This pair is the composition of the pairs
\[
((X^h_{Z_1})^h_{Z_2 \times_X X^h_{Z_1}}, Z_2 \times_X X^h_{Z_1})\quad\text{and}\quad(Z_2 \times_X X^h_{Z_1},Z_1\cap Z_2).
\]
By Lemma~\ref{lem:henselizationproperties} (iii), it suffices to show that both pairs are henselian. The first is henselian by definition. The second is the pullback of the henselian pair $(X^h_{Z_1},Z_1)$ by a closed immersion, and hence is also henselian by Lemma~\ref{lem:henselizationproperties} (ii).

To complete the proof we need to use the explicit description of the henselization as a limit. The pair $(V,\pi)$ is an object in $\mathrm{Neib}(X,Z_1\cap Z_2)$, so that we have a canonical projection $X^h_{Z_1\cap Z_2}\to V$. On the other hand, since~\eqref{eqn:hensel} is an isomorphism,
\[
X^h_{Z_1\cap Z_2}\cong \lim_{(V',\pi')\in \mathrm{Neib}(X^h_{Z_1}, Z_2 \times_X X^h_{Z_1})} V'.
\]
Since finitely presented $\O(X)$-algebras are compact objects in the category of $\O(X)$-algebras, finitely presented affine $X$-schemes are cocompact in the category of affine $X$-schemes. In particular, there exists a factorization
\[
X^h_{Z_1\cap Z_2} \longrightarrow V' \longrightarrow V
\]
for some $(V',\pi')\in \mathrm{Neib}(X^h_{Z_1}, Z_2 \times_X X^h_{Z_1})$.
Since
\[
X^h_{Z_1}=\lim_{(V_1,\pi_1)\in\mathrm{Neib}(X,Z_1)} V_1,
\]
and since $\pi'\colon V'\to X^h_{Z_1}$ is finitely presented, it follows from \cite[1.8.4.2]{EGA41} that $\pi'$ is the pullback of a finitely presented affine map $\pi_2\colon V_2\to V_1$ for some $(V_1,\pi_1)\in\mathrm{Neib}(X,Z_1)$.

 Refining $V_1$ if necessary, it will automatically be the case that $\pi_2$ is \'etale (by \cite[17.7.8]{EGA44}), that it is an isomorphism over $Z_2\times_XV_1$ (by \cite[8.8.2.4]{EGA43}), and that $\pi_1\circ\pi_2$ factors through $\pi$ (by cocompactness of $V$).
\end{proof}

\begin{prop}
\label{prop:affineNisnevich}
The topology $t_{\mathrm{AffNis}}$ on $\Sm_S^{\aff}$ coincides with the Nisnevich topology.
\end{prop}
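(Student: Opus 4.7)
The plan is to combine Corollary \ref{cor:restrictedNis} with Lemma \ref{lem:etalenbd}. Since $t_\mathrm{AffNis}$ is by construction coarser than the Nisnevich topology, only the reverse inclusion requires proof. By Corollary \ref{cor:restrictedNis}, it suffices to show that for every restricted Nisnevich square $Q$ over an affine scheme $X\in\Sm_S^\aff$, with $\pi\colon V\to X$ the \'etale part and $U\hookrightarrow X$ the open part, the cover $\mathfrak U_Q$ is a $t_\mathrm{AffNis}$-cover.

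First I choose an affine open cover $U_1,\dots,U_n$ of $U$, so that $\{V,U_1,\dots,U_n\}$ is a Nisnevich cover of $X$ in $\Sm_S^\aff$ refining $\mathfrak U_Q$. I then prove by induction on $n$ the following statement: any Nisnevich cover of an affine $X$ of the form $\{\pi\colon V\to X,U_1\hookrightarrow X,\dots,U_n\hookrightarrow X\}$, in which $V$ is affine, the $U_i$ are affine open subschemes of $X$, and $\pi$ is an affine \'etale morphism that is an isomorphism over $X\setminus\bigcup_i U_i$, is a $t_\mathrm{AffNis}$-cover. The base case $n=1$ is the defining condition of an affine Nisnevich square.

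For the inductive step, set $Z_1=X\setminus U_1$ and $Z_2=X\setminus(U_2\cup\dots\cup U_n)$, so that $Z_1\cap Z_2=X\setminus\bigcup_i U_i$ is exactly the locus over which $\pi$ is an isomorphism. Lemma \ref{lem:etalenbd} then produces affine \'etale morphisms $V_2\xrightarrow{\pi_2}V_1\xrightarrow{\pi_1}X$ with $\pi_1\circ\pi_2$ factoring through $\pi$, such that $\pi_1$ is an isomorphism over $Z_1$ and $\pi_2$ is an isomorphism over $Z_2\times_X V_1$. The first property exhibits $\{V_1,U_1\}$ as an affine Nisnevich square over $X$, hence as a $t_\mathrm{AffNis}$-cover. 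Since $\pi_1$ is affine, each $U_i\times_X V_1$ is an affine open subscheme of $V_1$, and the second property presents $\{V_2,U_2\times_X V_1,\dots,U_n\times_X V_1\}$ as a Nisnevich cover of $V_1$ of exactly the same shape with only $n-1$ open affines; the inductive hypothesis therefore makes it a $t_\mathrm{AffNis}$-cover of $V_1$.

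Composing these two $t_\mathrm{AffNis}$-covers exhibits $\{V_2,U_1,U_2\times_X V_1,\dots,U_n\times_X V_1\}$ as a $t_\mathrm{AffNis}$-cover of $X$, and the factorisation of $\pi_1\circ\pi_2$ through $\pi$ ensures that this cover refines $\{V,U_1,\dots,U_n\}$, and hence $\mathfrak U_Q$. The geometric difficulty has been absorbed into Lemma \ref{lem:etalenbd}; the obstacle in the present argument is purely combinatorial, namely identifying the correct decomposition $Z_1\cap Z_2$ of the complement of $U$ at each inductive step so that one can peel off the affine Nisnevich square $\{V_1,U_1\}$ first and then recurse on $V_1$ with one fewer affine open.
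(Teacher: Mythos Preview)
Your argument has a genuine gap in the base case and in the key claim of the inductive step: an affine Nisnevich square, by definition (Example~\ref{ex:cdstructures}(5)), requires the open immersion to be a \emph{principal} open $\Spec(A_f)\hookrightarrow\Spec(A)$, not merely an affine open subscheme. An arbitrary affine open $U_1\subset X$ need not be principal, so neither your base case ``$\{V,U_1\}$ is the defining condition of an affine Nisnevich square'' nor your inductive claim ``$\{V_1,U_1\}$ is an affine Nisnevich square over $X$'' is justified as written. Indeed, the statement that $\{V,U_1\}$ is a $t_{\mathrm{AffNis}}$-cover when $U_1$ is an arbitrary affine open and $V\to X$ is an isomorphism over $X\setminus U_1$ is essentially a special case of the proposition itself, so invoking it as the base case is circular.

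The fix is immediate: since $U$ is quasi-compact, cover it by finitely many \emph{principal} opens $U_i=X_{f_i}$ rather than affine opens, and induct on the number of these. Then $Z_1=V(f_1)$ is a hypersurface, so $\{V_1,X_{f_1}\}$ really is an affine Nisnevich square, and the $U_i\times_XV_1=(V_1)_{f_i}$ are again principal opens in $V_1$, so the inductive hypothesis applies. With this correction your proof is essentially the paper's: both induct on the number of generators $f_1,\dots,f_n$ of the ideal of the closed complement and invoke Lemma~\ref{lem:etalenbd} at each step. The only cosmetic difference is that the paper splits $Z$ as $Z_1=V(f_1,\dots,f_{n-1})$ and $Z_2=V(f_n)$ (peeling off one generator from the ``$\pi_2$-side''), whereas you split it as $Z_1=V(f_1)$ and $Z_2=V(f_2,\dots,f_n)$ (peeling off one generator from the ``$\pi_1$-side''); the two are mirror images of the same argument.
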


\begin{proof}
Set $t := t_{\mathrm{AffNis}}$.  Suppose $X\in\Sm_S^{\aff}$, $Z\subset X$ is a finitely presented closed subscheme, and $\pi\colon V\to X$ is an affine \'etale map that is an isomorphism over $Z$.  Let $\mathfrak U$ be the Nisnevich cover of $X$ in $\Sm_S^{\aff}$ consisting of $\pi$ and of all affine schemes mapping to $X\setminus Z$.  By Corollary~\ref{cor:restrictedNis}, it will suffice to show that $\mathfrak U$ is a $t$-cover. We establish this statement by induction on the minimal number $n$ of generators of the ideal of $Z$.
	
If $n=0$, then $Z=X$ and $\pi$ is an isomorphism and there is nothing to check. If $n=1$, then $X\setminus Z=X_{f}$ for some $f\in\O(X)$, so $\mathfrak U$ is a $t$-cover by definition of the affine Nisnevich cd-structure. Suppose that $n\geq 2$, and choose $f_1,\dotsc,f_n\in \O(X)$ such that $Z=V(f_1,\dotsc,f_n)$. Let $Z_1=V(f_1,\dotsc,f_{n-1})$ and $Z_2=V(f_n)$, so that $Z=Z_1\cap Z_2$. By Lemma~\ref{lem:etalenbd}, $\pi\colon V\to X$ is refined by a composite of affine \'etale maps $\pi_2\colon V_2\to V_1$ and $\pi_1\colon V_1\to X$, where $\pi_1$ is an isomorphism over $Z_1$ and $\pi_2$ is an isomorphism over $Z_2\times_XV_1$. Note that $\{V_2, V_1\setminus(Z_2\times_XV_1)\}$ is a $t$-cover of $V_1$. Applying the induction hypothesis, we see that $\{V_1,X\setminus Z_1\}$ is refined by a $t$-cover of $X$. Thus, $\{V_2,V_1\setminus(Z_2\times_XV_1), X\setminus Z_1\}$ is refined by a $t$-cover of $X$, which clearly refines $\mathfrak U$. Hence, $\mathfrak U$ is a $t$-cover.
\end{proof}

\section{Fibrancy and cd-structures}
\label{s:descent}
\label{sub:descent}
In this section, we study simplicial presheaves on a category equipped with a topology generated by a cd-structure.  Such categories may be equipped with a model structure for which the weak equivalences are defined ``locally" with respect to the topology. The main goals of this section are to establish Theorem \ref{thm:cd} and Theorem \ref{thm:affinedescent}.

Theorem \ref{thm:cd} is a variant of a result of Voevodsky characterizing fibrant objects in this local model structure in terms of a simple ``excision'' condition.  Subsection \ref{ss:descent} recalls some basic definitions from local homotopy theory required to precisely formulate this result; as we hope is clear from the proof, this result holds in great generality.  We refer the reader to Remark \ref{rem:t-local} for a comparison of the local homotopy theory we use to other commonly used versions in the literature.

The other main result of this section is Theorem \ref{thm:affinedescent}.  For this result, we restrict our attention to the local homotopy theory of simplicial presheaves on smooth schemes with respect to the Zariski or Nisnevich topologies.  Building on Theorem \ref{thm:cd} and using the characterizations of the Zariski and Nisnevich topologies on smooth schemes established in Propositions \ref{prop:affineZariski} and \ref{prop:affineNisnevich}, this result essentially provides a convenient characterization of fibrancy in the local model structure in terms of the affine Zariski or Nisnevich squares introduced in Example \ref{ex:cdstructures}.

\subsection{Descent}
\label{ss:descent}
Let $\mathbf C$ be a small category. We denote by $\sPre(\mathbf C)$ the category of simplicial presheaves on $\mathbf C$, which we regard as a model category with the injective model structure: the weak equivalences are the objectwise weak equivalences, the cofibrations are the monomorphisms, and the fibrations are determined by the right lifting property.  We denote by $\Map(\mathcal F,\mathcal G)$ the derived mapping space between two simplicial presheaves $\mathcal F$ and $\mathcal G$, in the sense of \cite[Notation 17.4.2]{Hirschhorn}.

The model category $\sPre(\mathbf C)$ is simplicial, proper, and combinatorial \cite[Proposition A.2.8.2]{HTT}. In particular, we may use the machinery of left Bousfield localizations \cite[\S A.3.7]{HTT}, which we briefly recall. For any set $\mathcal S$ of morphisms in $\sPre(\mathbf C)$, a simplicial presheaf $\mathcal F$ is \emph{$\mathcal S$-local} if, for every $f\colon \mathcal G\to \mathcal H$ in $\mathcal S$, the induced map
\[
f^*\colon \Map(\mathcal H,\mathcal F) \longrightarrow \Map(\mathcal G,\mathcal F)
\]
is a weak equivalence. A morphism $f\colon \mathcal G\to\mathcal H$ is an \emph{$\mathcal S$-equivalence} if, for every $\mathcal S$-local $\mathcal F$, the above map is a weak equivalence. The $\mathcal S$-equivalences are the weak equivalence of a new model structure $\mathcal{S}^{-1}\sPre(\mathbf C)$ on $\sPre(\mathbf C)$, called the \emph{$\mathcal S$-local model structure}. The cofibrations in $\mathcal{S}^{-1}\sPre(\mathbf C)$ are still the monomorphisms, and the fibrant objects are the fibrant objects in $\sPre(\mathbf C)$ that are $\mathcal S$-local. The identity functors form a Quillen adjunction
\[
\xymatrix{
\sPre(\mathbf C) \ar@<.17em>[r] & \ar@<.17em>[l] \mathcal{S}^{-1}\sPre(\mathbf C).
}\]
Moreover, the right derived functor $\Ho(\mathcal{S}^{-1}\sPre(\mathbf C))\to \Ho(\sPre(\mathbf C))$ is fully faithful and its essential image is precisely the subcategory of $\mathcal S$-local objects.

\begin{defn}
\label{dfn:t-local}
Let $\mathbf C$ be a small category and $t$ a Grothendieck topology on $\mathbf C$.  Let $\mathcal S_t$ be the set of covering sieves for $t$, viewed as monomorphisms of simplicial presheaves $R\hookrightarrow X$ with $X$ representable.  We say that a simplicial presheaf on $\mathbf C$ \emph{satisfies $t$-descent} or is \emph{$t$-local} if it is $\mathcal S_t$-local, and we say that a morphism of simplicial presheaves is a \emph{$t$-local weak equivalence} if it is an $\mathcal S_t$-equivalence. We denote by $R_t$ a fibrant replacement functor in the $\mathcal S_t$-local model structure.
\end{defn}

\begin{ex}\label{ex:t-local}
If $\F$ is a presheaf of sets, viewed as a discrete simplicial presheaf, then $\F$ is $t$-local if and only if it is a $t$-sheaf.  Moreover, $R_t\F$ is weakly equivalent to the ordinary sheafification of $\F$.  Indeed, the canonical map from $\F$ to its sheafification is a $t$-local weak equivalence, by \cite[Theorem A.5]{DHI}.  In particular, a morphism between presheaves of sets is a $t$-local weak equivalence if and only if it becomes an isomorphism after sheafification.
\end{ex}

If $\mathfrak U=\{U_i\to X\}_{i\in I}$ is a family of maps in $\mathbf C$, we denote by $\check C(\mathfrak U)$ its \v Cech nerve, which is a simplicial presheaf on $\mathbf C$ with an augmentation to $X$ (see \cite[\S4.1]{DHI}).  The following result gives a useful characterization of $t$-local objects in terms of ``\v Cech descent."

\begin{lem}
\label{lem:descent}
Suppose $\mathbf C$ is a small category. For every $X\in\mathbf C$, let $K(X)$ be a set whose elements are families of maps $\{U_i\to X\}_{i\in I}$ with target $X$. Suppose that, for every $\{U_i\to X\}_{i\in I}$ in $K(X)$ and every map $Y\to X$ in $\mathbf C$, the fiber products $U_i\times_XY$ exist and $\{U_i\times_XY\to Y\}_{i\in I}$ is in $K(Y)$ (up to reindexing). Let $t$ be the coarsest topology on $\mathbf C$ such that the elements of $K(X)$ are $t$-covers for all $X\in\mathbf C$, and let
\[\mathcal S=\{\check C(\mathfrak U)\to X| X\in\mathbf C,\; \mathfrak U\in K(X)\}.\]
A simplicial presheaf $\F$ on $\mathbf C$ is $t$-local if and only if it is $\mathcal S$-local, i.e., if and only if for every $X\in\mathbf C$ and every $\{U_i\to X\}_{i\in I}\in K(X)$, the canonical map
	\[
	\F(X) \longrightarrow \operatorname{holim}_{n\in \Delta} \prod_{i_0,\dots,i_n\in I} \F(U_{i_0}\times_X\dotsb\times_X U_{i_n}).
	\]
	is a weak equivalence.
\end{lem}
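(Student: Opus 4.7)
The equivalence of $\mathcal S$-locality with the homotopy limit condition in the statement is essentially a restatement. The Čech nerve $\check C(\mathfrak U)$ is, in degree $n$, the coproduct of representables $\bigsqcup_{i_0,\dots,i_n}U_{i_0}\times_X\cdots\times_X U_{i_n}$, so the derived mapping space $\Map(\check C(\mathfrak U),\F)$ is the displayed homotopy limit, and the comparison map in the statement is that induced by the augmentation $\check C(\mathfrak U)\to X$.

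For the implication $t$-local $\Rightarrow\mathcal S$-local, I would show that each generator $\check C(\mathfrak U)\to X$ of $\mathcal S$ is already a $t$-local weak equivalence. Factor this augmentation as $\check C(\mathfrak U)\to R_\mathfrak U\hookrightarrow X$, with $R_\mathfrak U$ the sieve generated by $\mathfrak U$. The inclusion $R_\mathfrak U\hookrightarrow X$ is a $t$-local weak equivalence by definition, since $\mathfrak U$ is a $t$-cover. The map $\check C(\mathfrak U)\to R_\mathfrak U$ becomes an objectwise weak equivalence after base change along each $U_j\to X$, because $R_\mathfrak U\times_X U_j=U_j$ (the map $U_j\to X$ lies in the sieve) while $\check C(\mathfrak U)\times_X U_j$ is the Čech nerve of $\{U_i\times_X U_j\to U_j\}$, which acquires an extra degeneracy from the diagonal section of $U_j\times_X U_j\to U_j$ and is therefore simplicially contractible over $U_j$. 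Since $\{U_j\to X\}$ is a $t$-cover, this suffices.

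The direction $\mathcal S$-local $\Rightarrow t$-local is the essential content, and I would follow the strategy of Dugger-Hollander-Isaksen \cite{DHI}. Let $\tilde t$ be the collection of sieves $R$ on $X\in\mathbf C$ such that $R\hookrightarrow X$ is a weak equivalence in $\mathcal S^{-1}\sPre(\mathbf C)$. The plan is to verify that $\tilde t$ is a Grothendieck topology containing every sieve generated by a family in $K$; by minimality of $t$, this will force $\tilde t\supseteq t$, which amounts exactly to the statement that every $\mathcal S$-local $\F$ is $t$-local. Containment of $K$-generated sieves reduces to showing $R_\mathfrak U\hookrightarrow X$ is an $\mathcal S$-equivalence, which follows by two-out-of-three from the factorization $\check C(\mathfrak U)\to R_\mathfrak U\hookrightarrow X$ combined with the standard identification of $R_\mathfrak U$ with the (homotopy) colimit of the Čech diagram. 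The pullback axiom for $\tilde t$ is direct from the pullback-stability of $K$. The principal obstacle is verifying the transitivity axiom: if $R\in\tilde t(X)$ and $S$ is a sieve with $f^*S\in\tilde t(Y)$ for every $f\colon Y\to X$ in $R$, show $S\in\tilde t(X)$. This requires an iterated refinement by Čech nerves of $K$-covers whose viability depends essentially on the pullback-stability of $K$; this step parallels the Čech-to-hyperdescent comparison of \cite[Appendix A]{DHI}, whose proof strategy I would adapt.
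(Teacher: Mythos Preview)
Your approach is essentially the paper's, but with the cited results unpacked. The paper's proof is two sentences: it observes that $\check C(\mathfrak U)\to R_{\mathfrak U}$ is an \emph{objectwise} weak equivalence by \cite[Proposition~A.1]{DHI}, so $\mathcal S$ is, up to weak equivalence, a pullback-stable set of $t$-covering sieves generating $t$; the conclusion is then a direct citation of \cite[Corollary~C.2]{Hoyois}.

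Two remarks on your write-up. First, your treatment of $\check C(\mathfrak U)\to R_{\mathfrak U}$ is more circuitous than necessary: this map is already an objectwise weak equivalence, not merely a $t$-local one. Evaluating at any $Y$, the map is the augmentation from the \v Cech nerve of a map of sets $\coprod_i\hom(Y,U_i)\to\hom(Y,X)$ to its image, which is always a weak equivalence. You do not need to pull back to the $U_j$ or invoke that $\{U_j\to X\}$ is a $t$-cover. Second, the transitivity verification you flag as the ``principal obstacle'' is exactly the content of \cite[Corollary~C.2]{Hoyois} (or equivalently the argument behind \cite[Theorem~A.5/Corollary~A.8]{DHI}), not the \v Cech-to-hyperdescent comparison you cite from \cite[Appendix~A]{DHI}; the latter is a different statement. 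Your outline of the transitivity argument is correct in spirit---one does build a refinement by iterated \v Cech nerves using pullback-stability of $K$---but the details are nontrivial and are precisely what the paper outsources to the citation.
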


\begin{proof}
	For any $\mathfrak U\in K(X)$, the map $\check C(\mathfrak U)\to X$ factors through the sieve $R$ generated by $\mathfrak U$, and the resulting map $\check C(\mathfrak U)\to R$ is a weak equivalence \cite[Proposition A.1]{DHI}. Hence, up to weak equivalence, $\mathcal S$ is a subset of the set of all $t$-covering sieves. By assumption, the sieves in $\mathcal S$ are stable under pullbacks and generate the topology $t$. The result now follows from \cite[Corollary C.2]{Hoyois}.
\end{proof}

\begin{rem}
\label{rem:t-local}
The $t$-local model structure on simplicial presheaves defined above should not be confused with the Jardine model structure defined in \cite{Jardine} (which is Quillen equivalent to the Joyal model structure on simplicial sheaves used in \cite{MV}).  The Jardine model structure is the left Bousfield localization of $\sPre(\mathbf C)$ whose local objects are the simplicial presheaves satisfying \emph{$t$-hyperdescent}, i.e., descent for arbitrary hypercovers. Furthermore, for sites with enough points, the weak equivalences in the Jardine model structure are precisely the stalkwise weak equivalences.

In general, the Jardine model structure is a left Bousfield localization of the $t$-local model structure we consider, and the two model structures may actually differ: see \cite[Example A.9]{DHI} for an explicit example.  However, if $S$ is a Noetherian scheme of finite Krull dimension, $\mathbf{C} = \Sm_S$ and $t$ is the Nisnevich topology, then the two model structures are equivalent: given the characterization of Nisnevich-local objects provided by Theorem~\ref{thm:cd} below, this is the statement of \cite[\S3, Lemma 1.18]{MV}; see also \cite[Example A.10]{DHI} for more discussion of this point.
\end{rem}

\subsection{Excision and descent}
\label{ss:excisionanddescent}
\begin{defn}
\label{defn:excision}
	Let $\mathbf C$ be a small category with an initial object $\emptyset$ and let $P$ be a cd-structure on $\mathbf C$. A simplicial presheaf $\F$ on $\mathbf C$ \emph{satisfies $P$-excision} if
	\begin{itemize}[noitemsep,topsep=1pt]
		\item $\F(\emptyset)$ is weakly contractible;
		\item for every square $Q$ in $P$, $\F(Q)$ is homotopy Cartesian.
	\end{itemize}
\end{defn}

\begin{rem}
Simplicial presheaves that satisfy excision as in Definition \ref{defn:excision} have been given various different names.  In \cite{VVcd}, Voevodsky calls such presheaves \emph{$P$-flasque}.  Morel and Voevodsky \cite[\S 3.1]{MV} and Morel \cite[Appendix A]{MField} use different terminology, which pays homage to the foundational work of Brown and Gersten.
\end{rem}

\begin{defn}[{\cite[Definition 3.4.6]{BorceuxIII}}]
\label{defn:strictlyinitial}
An initial object $\emptyset$ of a category $\mathbf{C}$ is called {\em strict} if any morphism $X \to \emptyset$ is an isomorphism.
\end{defn}

\begin{rem}
Equivalently, the definition can be restated as saying that if $X \to \emptyset$ is any morphism, then $X$ is initial.  The condition that an initial object is strict means that the canonical map $\emptyset \to Y$ is always a monomorphism.  If $\mathbf{C}$ is a Grothendieck topos, then by \cite[Proposition 3.4.7]{BorceuxIII} the initial object is strict.
\end{rem}

The following theorem is a variant of \cite[Corollary 5.10]{VVcd}; note that its hypotheses hold for each cd-structure considered in Example \ref{ex:cdstructures}.

\begin{thm}[Voevodsky]
	\label{thm:cd}
	Let $\mathbf{C}$ be a small category with a strictly initial object and let $P$ be a cd-structure on $\mathbf{C}$ such that:
	\begin{enumerate}[noitemsep,topsep=1pt]
		\item every square in $P$ is Cartesian;
		\item pullbacks of squares in $P$ exist and belong to $P$;
		\item for every square \eqref{eqn:square} in $P$, $U\to X$ is a monomorphism;
		\item for every square \eqref{eqn:square} in $P$, the square
		\[
		\xymatrix{
		W \ar[r] \ar[d]_{\Delta} & V \ar[d]^{\Delta} \\
		W\times_UW \ar[r] & V\times_XV
		}
		\]
		is also in $P$.
	\end{enumerate}
	Then a simplicial presheaf $\F$ on $\mathbf{C}$ satisfies $P$-excision if and only if it satisfies $t_P$-descent.
\end{thm}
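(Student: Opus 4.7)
The plan is to reduce both implications to the \v{C}ech descent criterion provided by Lemma~\ref{lem:descent}. Let $K(X)$ consist of the two-element covers $\mathfrak U_Q=\{U\to X, V\to X\}$ coming from $P$-squares $Q$ with bottom-right corner $X$, together with the empty cover at $\emptyset$. Condition~(2), combined with strictness of $\emptyset$, ensures that $K$ is stable under the base changes required by Lemma~\ref{lem:descent}, and by construction $t_P$ is generated by $K$. The lemma then identifies $t_P$-descent with the conditions: (a)~$\F(\emptyset)\simeq\ast$, and (b)~for every $P$-square $Q$, the map $\F(X)\to\operatorname{holim}_{\Delta}\F(\check C(\mathfrak U_Q))$ is a weak equivalence. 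Clause~(a) matches the empty-object part of $P$-excision directly, so the whole theorem reduces to comparing (b) with homotopy Cartesianness of $\F(Q)$.

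To carry out this comparison, I would analyze $\check C(\mathfrak U_Q)$ using conditions~(3) and~(4). Condition~(3), that $U\to X$ is a monomorphism, forces the diagonal $U\to U\times_X U$ to be an isomorphism, hence $U\times_X\cdots\times_X U\cong U$; since pullbacks of monomorphisms are monomorphisms, $W\to V$ is also a monomorphism, so the ``mixed'' summands of $\check C_n$ collapse similarly. Only the purely $V$-terms $V^{(n)}:=V\times_X\cdots\times_X V$ remain as genuinely new data. Condition~(4) supplies a $P$-square
\[
\xymatrix{
W \ar[r]\ar[d] & V \ar[d] \\
W\times_U W \ar[r] & V\times_X V,
}
\]
with $V\times_X V$ as bottom-right corner, and iterating this together with condition~(2) produces, for each $n\geq 1$, a $P$-square whose bottom-right corner is $V^{(n)}$.

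Under $P$-excision, the resulting tower of $P$-squares expresses $\F(V^{(n)})$ as an iterated homotopy pullback of $\F$-values at the lower fibre products, so that $\operatorname{holim}_\Delta\F(\check C(\mathfrak U_Q))$ collapses to the $1$-coskeletal contribution $\F(U)\times^h_{\F(W)}\F(V)$; this yields~(b) and hence $t_P$-descent. Conversely, starting from~(b), the same tower of $P$-squares is read backwards to rewrite $\operatorname{holim}_\Delta\F(\check C(\mathfrak U_Q))$ as $\F(U)\times^h_{\F(W)}\F(V)$, so that $\F(X)\simeq\F(U)\times^h_{\F(W)}\F(V)$, i.e., $\F(Q)$ is homotopy Cartesian. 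The principal obstacle is precisely this combinatorial bookkeeping: aligning the simplicial structure of $\check C(\mathfrak U_Q)$ with the inductive tower produced by condition~(4) so that the homotopy limit of the Čech object collapses to the two-term homotopy pullback. This is the technical core of Voevodsky's argument in \cite{VVcd}.
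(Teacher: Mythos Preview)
Your opening move---reducing $t_P$-descent to \v{C}ech descent for the covers $\mathfrak U_Q$ via Lemma~\ref{lem:descent}---is exactly how the paper begins, and your use of strictness of $\emptyset$ and condition~(2) to verify the hypotheses of that lemma is correct. The divergence comes in how the \v{C}ech condition is compared with $P$-excision. You attempt to simplify the cosimplicial object $\F(\check C(\mathfrak U_Q))$ directly, using condition~(3) to collapse the $U$-factors and condition~(4) to express $\F(V^{(n)})$ inductively. The paper instead introduces the homotopy pushout $K_Q = U \amalg^h_W V$ with its map $k_Q\colon K_Q \to X$, so that $P$-excision is $\mathcal K$-locality for $\mathcal K = \{k_Q\}\cup\{e\}$, and then shows that $\mathcal K$-equivalences and $\mathcal C$-equivalences coincide by forming the homotopy pullback $\check C(\mathfrak U_Q) \times^h_X K_Q$. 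Because homotopy base change commutes with homotopy colimits, the projection $p_1$ to $\check C(\mathfrak U_Q)$ decomposes as a simplicial homotopy colimit of maps $k_{Q_{i,j}}$ for the pulled-back squares $Q_{i,j}=Q\times_X U^i\times_X V^j$; condition~(4) is then invoked only to exhibit $k_{Q_{0,j}}$ as a retract of a cobase change of $k_{Q'}$ for a square $Q'$ with \emph{both} legs mono. This device entirely avoids manipulating the cosimplicial totalization of $\F$-values.

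Your route has a real gap precisely at the step you flag as ``combinatorial bookkeeping'': the assertion that $\operatorname{holim}_\Delta \F(\check C(\mathfrak U_Q))$ ``collapses to the $1$-coskeletal contribution'' is stated, not argued. Knowing $\F(V^{(n)})$ as an iterated homotopy pullback does not by itself compute the totalization, because the face and degeneracy maps must be tracked through these identifications. Worse, in the converse direction the phrase ``read backwards'' conceals a circularity: to rewrite the \v Cech holim using your tower of $P$-squares you need $\F$ to send those squares to homotopy Cartesian squares, which is $P$-excision---the thing you are proving. The circularity can be broken, but only by first establishing the special case where both legs of $Q$ are monomorphisms (there $\check C(\mathfrak U_Q)$ and $K_Q$ are honestly weakly equivalent, so \v Cech descent and excision agree for that square without any hypothesis on $\F$), and then observing that the condition~(4) square always has this property. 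That inductive structure is absent from your sketch, and supplying it essentially reconstructs the paper's argument along a more laborious path.
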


\begin{proof}
Suppose $Q$ is a square in $P$ as in \eqref{eqn:square}. Let $\mathfrak U_Q=\{U\to X, V\to X\}$ be the $t_P$-cover defined by $Q$, and let $c_Q\colon \check C(\mathfrak U_Q)\to X$ be its \v Cech nerve.  By (1) we know that $Q$ is Cartesian, so we can write $W = U \times_X V$.
Let $K_Q$ be the homotopy pushout of the diagram
\[
U \longleftarrow U \times_X V \longrightarrow V.
\]
The map from the homotopy pushout to the ordinary pushout induced by the universal property yields a map $k_Q\colon K_Q\to X$.

Let $e$ be the map from the empty presheaf to the presheaf represented by the initial object of $\mathbf{C}$ (this map is never an isomorphism). Consider the sets of maps
\[
\mathcal C=\{ c_Q | Q\in P\}\cup\{e\}\quad\text{and}\quad\mathcal K=\{k_Q| Q\in P\}\cup\{e\}
\]
in $\sPre(\mathbf C)$.  By (2) and the assumption that the initial object of $\mathbf{C}$ is strictly initial, the $t_P$-covers $\mathfrak U_Q$, together with the empty cover of the initial object, satisfy the hypothesis of Lemma~\ref{lem:descent}. It follows that a simplicial presheaf $\F$ satisfies $t_P$-descent if and only if it is $\mathcal C$-local. On the other hand, by definition, $\F$ satisfies $P$-excision if and only if it is $\mathcal K$-local. Thus, to establish the claim, it will suffice to show that every map in $\mathcal K$ is a $\mathcal C$-equivalence, and conversely that every map in $\mathcal C$ is a $\mathcal K$-equivalence.

For $Q$ as above and $Y \in \{U,V,U \times_X V \}$, the cover $\{U\times_XY\to Y, V\times_XY\to Y\}$ is split, and therefore its \v Cech nerve $c_{Q \times_X Y}\colon\check C(\mathfrak U_Q) \times_X Y \to Y$ has an additional degeneracy; in particular, $c_{Q \times_X Y}$ is a weak equivalence.  Form the homotopy Cartesian square of simplicial presheaves:
	\[
	\xymatrix{
	\check C(\mathfrak U_Q)\times_X^h K_Q \ar[r]^-{p_2} \ar[d]_{p_1} & K_Q \ar[d]^{k_Q} \\
	\check C(\mathfrak U_Q) \ar[r]_{c_Q} & X.
	}
	\]
Since homotopy base change preserves homotopy colimits in the category of simplicial sets (see, e.g., \cite[Theorem 1.4]{RezkFib}), we can conclude that the projection $p_2$ is the homotopy pushout of the three maps $c_{Q\times_XY}\colon \check C(\mathfrak U_Q)\times_XY\to Y$, for $Y\in\{U,V,U\times_XV\}$, all of which are weak equivalences.  Thus, $p_2$ is a weak equivalence. By 2-out-of-3, it remains to show that $p_1$ is both a $\mathcal K$-equivalence and a $\mathcal C$-equivalence.
	
	For $i,j\geq 0$ and $i+j\geq 1$, let $Q_{i,j}=Q\times_XU^i\times_XV^j$, where $U^i$ is the $i$-fold fiber product of $U$ with itself over $X$ and similarly for $V^j$. The map $p_1$ is the homotopy colimit over $n\in\Delta^\op$ of the maps $\coprod_{i+j=n+1}k_{Q_{i,j}}$. Since each square $Q_{i,j}$ is in $P$, by (2), this shows that $p_1$ is a $\mathcal K$-equivalence.
	
	Assume for a moment that both legs in $Q$ are monomorphisms. Then one of the legs of $Q_{i,j}$ is always an isomorphism. For such squares it is clear that $k_{Q_{i,j}}$ is a weak equivalence, so that $p_1$ is a weak equivalence. This shows that $k_Q$ is a $\mathcal C$-equivalence for such $Q$.
	
	Let us prove that, in general, $p_1$ is a $\mathcal C$-equivalence. Since $U\to X$ is a monomorphism by (3), one of the legs of $Q_{i,j}$ is an isomorphism as soon as $i\geq 1$, so we only have to show that $k_{Q_{0,j}}$ is a $\mathcal C$-equivalence for $j\geq 1$. The square $Q_{0,j}$ is the lower square in the following diagram:
	\[
	\xymatrix{
	V^{j-1}\times_XW \ar[r] \ar[d] & V^{j-1}\times_XV \ar[d] \\
	V^{j-1}\times_X(W\times_UW) \ar[r] \ar[d] & V^{j-1}\times_X(V\times_XV) \ar[d] \\
	V^{j-1}\times_XW \ar[r] & V^{j-1}\times_XV.
	}
	\]
	
	Let $Q'$ be the top square. Since the composite of the vertical morphisms are the identity maps, $k_{Q_{0,j}}$ is a retraction of a homotopy cobase change of $k_{Q'}$.
	Therefore, it suffices to show that $k_{Q'}$ is a $\mathcal C$-equivalence.
	By (4) and (2), the top square $Q'$ is in $P$, and by (3) its legs are monomorphisms. We have already proved that $k_{Q'}$ is a $\mathcal C$-equivalence for such $Q'$.
\end{proof}

\begin{rem}
The conditions of Theorem \ref{thm:cd} imply by \cite[Lemma 2.5 and 2.11]{VVcd} that the cd-structure $P$ on $\mathbf{C}$ is complete and regular.  Voevodsky also introduces the notion of a bounded cd-structure \cite[Definition 2.22]{VVcd} and shows that, for $P$ a bounded, complete and regular cd-structure, a Jardine weak equivalence between $P$-excisive simplicial presheaves is a weak equivalence \cite[Lemma 3.5]{VVcd}. This means that for such a cd-structure, $P$-excision is equivalent to $t_P$-hyperdescent (cf.\ Remark \ref{rem:t-local}). Theorem \ref{thm:cd} shows that boundedness of $P$ is not relevant for the equivalence of $P$-excision and $t_P$-descent.
\end{rem}

\subsection{A criterion for Descent}
Let us denote by
\[
i\colon \Sm_S^\aff \longhookrightarrow \Sm_S
\]
the inclusion functor. It induces an adjunction
\[
\xymatrix{
i^*: \sPre(\Sm_S) \ar@<.17em>[r] & \ar@<.17em>[l] \sPre(\Sm_S^\aff): i_*.
}
\]
Since $i^*$ preserves weak equivalences and cofibrations, $(i^*,i_*)$ is a Quillen adjunction, and we will denote by $(i^*,\mathbf Ri_*)$ the derived adjunction. The functor $\mathbf{R}i_*$ is given by homotopy right Kan extension along $i$: if ${\mathscr F}\in\sPre(\Sm_S^{\aff})$ and $Y \in \Sm_S$, then
\[
\mathbf{R}i_* {\mathscr F}(Y) = \operatorname{holim}_{X \in \Sm_S^{\aff}/Y} {\mathscr F}(X).
\]

\begin{lem}
\label{lem:radjointfullyfaithful}
The functor $\mathbf{R}i_*$ is fully faithful.
\end{lem}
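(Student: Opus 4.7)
The plan is to show that the derived counit map $\varepsilon \colon i^*\mathbf{R}i_* \Rightarrow \mathrm{id}$ on $\sPre(\Sm_S^{\aff})$ is a natural weak equivalence; this will imply that $\mathbf{R}i_*$ is fully faithful on homotopy categories by the standard characterization of fully faithful right adjoints in a derived adjunction.

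Using the formula for $\mathbf{R}i_*$ recalled just before the lemma, for any $\mathscr F \in \sPre(\Sm_S^{\aff})$ and any $X \in \Sm_S^{\aff}$ one has
$$
(i^*\mathbf{R}i_*\mathscr F)(X) \;=\; (\mathbf{R}i_*\mathscr F)(X) \;=\; \operatorname{holim}_{(Y \to X)\in \Sm_S^{\aff}/X}\mathscr F(Y),
$$
and the $X$-component of the counit is induced by projecting to the factor indexed by the identity $\mathrm{id}_X \in \Sm_S^{\aff}/X$.

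The key observation is that $\mathrm{id}_X$ is a terminal object of the indexing category $\Sm_S^{\aff}/X$. It is a standard fact (a special case of Quillen's Theorem A: the inclusion of the full subcategory on $\mathrm{id}_X$ is homotopy initial because each comma category $((Y\to X)\downarrow\{\mathrm{id}_X\})$ is a one-point, hence contractible, category) that a homotopy limit indexed by a category with a terminal object $t$ is naturally weakly equivalent to the value of the diagram at $t$, via the projection to $t$. Applying this here yields that $\varepsilon_X$ is a weak equivalence for every $X \in \Sm_S^{\aff}$, completing the proof.

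I do not anticipate any real obstacle: once one has the description of $\mathbf{R}i_*$ as the homotopy right Kan extension along a fully faithful inclusion, the argument is essentially formal, and amounts to the elementary observation that the relevant slice category has a terminal object.
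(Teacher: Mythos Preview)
Your proposal is correct and takes essentially the same approach as the paper: both reduce to showing that the derived counit $i^*\mathbf{R}i_*\to\mathrm{id}$ is an equivalence, and both observe that for $X\in\Sm_S^{\aff}$ the slice category $\Sm_S^{\aff}/X$ has $\mathrm{id}_X$ as a terminal object, so the homotopy limit computing $(\mathbf{R}i_*\F)(X)$ collapses to $\F(X)$.
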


\begin{proof}
The claim is equivalent to the assertion that the counit map $i^* \mathbf{R} i_* \to \mathrm{id}$ is an isomorphism.  To demonstrate this latter statement, simply observe that if $Y$ is affine, then the category $\Sm_S^{\aff}/Y$ has $Y$ as a final object, and therefore $\operatorname{holim}_{X \in \Sm_S^{\aff}/Y} \F(X)\simeq \F(Y)$.
\end{proof}

\begin{lem}
\label{lem:cocontinuous}
Let $t$ be either the Zariski or the Nisnevich topology.
The functors $i^*$ and $\mathbf Ri_*$ preserve $t$-local simplicial presheaves and they restrict to an equivalence between the full subcategories of $\Ho(\sPre(\Sm_S))$ and $\Ho(\sPre(\Sm_S^\aff))$ spanned by the $t$-local simplicial presheaves.
\end{lem}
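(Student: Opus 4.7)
The plan is to derive both claims from the \emph{cocontinuity} of $i\colon \Sm_S^\aff \hookrightarrow \Sm_S$ for $t$: every $t$-covering sieve of $X \in \Sm_S^\aff$ in $\Sm_S$ restricts to a $t$-covering sieve of $X$ in $\Sm_S^\aff$. This is an immediate consequence of Propositions~\ref{prop:affineZariski} and~\ref{prop:affineNisnevich}: any $t$-cover of $X$ in $\Sm_S$ admits a refinement by an affine $t$-cover, and the refinement stays in the given sieve by its closure under precomposition. The preservation $\mathcal F \mapsto i^*\mathcal F$ of $t$-local objects then follows directly from Theorem~\ref{thm:cd}: the affine Zariski/Nisnevich cd-structure on $\Sm_S^\aff$ is contained in the Zariski/Nisnevich cd-structure on $\Sm_S$, so $i^*$ sends $P$-excisive presheaves on $\Sm_S$ to AffP-excisive presheaves on $\Sm_S^\aff$; Theorem~\ref{thm:cd} together with Propositions~\ref{prop:affineZariski}/\ref{prop:affineNisnevich} then yields $t$-locality on $\Sm_S^\aff$.

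To see that $\mathbf R i_*$ preserves $t$-local objects, fix $\mathcal G$ that is $t$-local on $\Sm_S^\aff$ and a $t$-covering sieve $R \hookrightarrow Y$ in $\Sm_S$. The derived adjunction $(i^*, \mathbf R i_*)$ identifies $\Map(Y, \mathbf R i_*\mathcal G) \simeq \Map(i^*Y, \mathcal G)$ and $\Map(R, \mathbf R i_*\mathcal G) \simeq \Map(i^*R, \mathcal G)$, so checking $t$-descent at $R \hookrightarrow Y$ for $\mathbf R i_*\mathcal G$ reduces, since $\mathcal G$ is $t$-local, to showing that $i^*R \hookrightarrow i^*Y$ is a $t$-local weak equivalence in $\sPre(\Sm_S^\aff)$. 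Both presheaves are discrete of sets, and $i^*Y$ is already a $t$-sheaf (by the first preservation applied to the representable $Y$), so by Example~\ref{ex:t-local} it is enough to show that the $t$-sheafification of $i^*R$ equals $i^*Y$. Given $X \in \Sm_S^\aff$ and $f\colon X \to Y$, the pullback $f^*R$ is a $t$-covering sieve of $X$ in $\Sm_S$; cocontinuity produces a $t$-cover of $X$ in $\Sm_S^\aff$ along which $f$ factors into $R$, exactly as required.

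Given the two preservations, the derived adjunction $(i^*, \mathbf R i_*)$ restricts to an adjunction between the subcategories of $t$-local objects. The right adjoint $\mathbf R i_*$ is fully faithful by Lemma~\ref{lem:radjointfullyfaithful}, so the counit is invertible, and it remains to check that the unit $\mathcal F \to \mathbf R i_* i^*\mathcal F$ is a weak equivalence for each $t$-local $\mathcal F$ on $\Sm_S$. Both sides are $t$-local (using the two preservations) and agree on affine $Y$ (the homotopy limit computing $\mathbf R i_*$ collapses since $Y$ is terminal in $\Sm_S^\aff/Y$). For arbitrary $Y \in \Sm_S$, a finite affine Zariski cover $\{Y_\alpha \to Y\}$ combined with Zariski descent (satisfied by both sides since $t$ is finer than the Zariski topology) reduces the comparison at $Y$ to the Čech nerve, and iterating affine covers on the quasi-affine intersection terms expresses the unit's value at $Y$ entirely in terms of $i^*\mathcal F$, forcing it to be an equivalence. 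The main obstacle is the cocontinuity input in the Nisnevich case, which genuinely depends on the subtle refinement Lemma~\ref{lem:etalenbd}; all remaining steps are formal consequences of the adjunction, Theorem~\ref{thm:cd}, and descent.
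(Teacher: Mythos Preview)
Your proof is correct and follows the same overall architecture as the paper's: show that $i^*$ and $\mathbf Ri_*$ preserve $t$-local objects (the latter via cocontinuity), then deduce the equivalence by a two-step bootstrap from affines to separated schemes to all of $\Sm_S$. Two points of comparison are worth noting. First, the paper's argument is lighter: for $i^*$-preservation it appeals only to the \v Cech-descent criterion of Lemma~\ref{lem:descent} (using that the defining Nisnevich covers in $\Sm_S^\aff$ are stable under pullback, since fiber products of affine schemes over an affine scheme are affine), and for cocontinuity it simply cites \cite[Exp.~III,~2.2]{SGA41}; neither step invokes Theorem~\ref{thm:cd} or Propositions~\ref{prop:affineZariski}/\ref{prop:affineNisnevich}. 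Your route through excision and the affine cd-structures is valid but imports the refinement Lemma~\ref{lem:etalenbd} into a lemma that does not actually need it. Second, and relatedly, your closing remark that cocontinuity in the Nisnevich case ``genuinely depends'' on Lemma~\ref{lem:etalenbd} is not accurate: given a Nisnevich covering sieve $R$ of an affine $X$ in $\Sm_S$, refining any generating cover by affine Zariski opens already produces an affine Nisnevich cover of $X$ inside $R$, so cocontinuity is elementary. Finally, where you verify the unit $\F\to\mathbf Ri_*i^*\F$ is an equivalence, the paper instead checks that $i^*$ is conservative on $t$-local objects; the underlying separated-then-general \v Cech argument is identical.
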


\begin{proof}
	That $i^*$ preserves $t$-local presheaves follows at once from the characterization of $t$-local presheaves in terms of \v Cech descent (Lemma~\ref{lem:descent}). Since
\[
\Map(X,\mathbf Ri_*(\F))\simeq \Map(i^*(X),\F),
\]
$\mathbf Ri_*$ preserves $t$-local objects if and only if, for every $t$-covering sieve $R\hookrightarrow X$ on $\Sm_S$, the restriction $i^*(R)\hookrightarrow i^*(X)$ is a $t$-local weak equivalence. Recall from Example~\ref{ex:t-local} that $t$-local weak equivalences between discrete simplicial presheaves are simply the $t$-local isomorphisms.  That $i^*$ sends $t$-covering sieves to $t$-local isomorphisms is in turn equivalent to $i_*$ preserving $t$-sheaves of \emph{sets}; this follows from \cite[Exp. III, 2.2]{SGA41} together with the fact that $i$ is cocontinuous for either topology.
	
	Thus, the derived adjunction $i^*\dashv \mathbf Ri_*$ restricts to an adjunction between the subcategories of $t$-local presheaves, where the right adjoint is fully faithful (by Lemma \ref{lem:radjointfullyfaithful}). To conclude that it is an equivalence, it remains to show that the left adjoint is conservative.  Indeed, given an adjunction $(F,G)$ with $G$ fully faithful and $F$ conservative, it follows from the commutativity of the triangle
\[
\xymatrix{
F \ar[r]\ar[dr]_{\mathrm{id}} & F\circ G\circ F \ar[d]\\
& F
}
\]
that $G$ is a quasi-inverse to $F$.

Let $f$ be a morphism between $t$-local simplicial presheaves on $\Sm_S$ such that $i^*(f)$ is a weak equivalence. Since any separated scheme admits a $t$-cover whose \v Cech nerve consists of affine schemes, it follows that $f$ is a weak equivalence on $\Sm_S^\sep$. Similarly, since any scheme admits a $t$-cover whose \v Cech nerve consists of separated schemes, we deduce that $f$ is a weak equivalence on all of $\Sm_S$.
\end{proof}

\begin{rem}
	In \cite[\S{}A.2]{MField}, the simplicial presheaf $\mathbf Ri_*i^*(\F)$ is called the \emph{affine replacement} of $\F$.
\end{rem}

The first part of the following theorem is a strengthening of \cite[Lemma A.10]{MField}; Schlichting gives a different proof of this result in \cite[\href{http://arxiv.org/pdf/1502.05424v2.pdf}{Theorem A.2}]{Schlichting}.  The second part of the next result is a significant generalization of \cite[Theorem A.14]{MField}; in particular, it provides an answer to the question posed in \cite[Remark A.15]{MField}.

\begin{thm}
\label{thm:affinedescent}
Let $\F$ be a simplicial presheaf on $\Sm_S$.
\begin{enumerate}[noitemsep,topsep=1pt]
	\item[(i)] If $\F$ satisfies affine Zariski excision, then, for every $X\in\Sm_S^\aff$, the map
	\[
	\F(X) \longrightarrow R_{\Zar}\F(X)
	\]
	is a weak equivalence.
	\item[(ii)] If $\F$ satisfies affine Nisnevich excision, then $R_{\Zar}\F$ is Nisnevich-local.
\end{enumerate}
\end{thm}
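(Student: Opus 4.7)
The plan is to compare both $\F$ and $R_{\Zar}\F$ with the homotopy right Kan extension $\mathbf{R}i_* i^*\F$ of the restriction of $\F$ to $\Sm_S^\aff$. Affine excision, via Voevodsky's Theorem \ref{thm:cd} together with Propositions \ref{prop:affineZariski} and \ref{prop:affineNisnevich}, will force $i^*\F$ to be local for a suitable topology on $\Sm_S^\aff$, and the adjoint equivalence of Lemma \ref{lem:cocontinuous} will then transport that locality back to $\Sm_S$.

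For part (i), I would first note that affine Zariski excision for $\F$ is, tautologically, $\mathrm{AffZar}$-excision for $i^*\F$ in the sense of Definition \ref{defn:excision}. Since the affine Zariski cd-structure satisfies the hypotheses of Theorem \ref{thm:cd}, this is equivalent to $i^*\F$ being $t_{\mathrm{AffZar}}$-local, which by Proposition \ref{prop:affineZariski} is the same as $i^*\F$ being Zariski-local on $\Sm_S^\aff$. In particular, both $i^*\F$ and $i^*R_{\Zar}\F$ are then Zariski-local on $\Sm_S^\aff$, the latter by Lemma \ref{lem:cocontinuous}. Next, I would check that $i^*\F \to i^*R_{\Zar}\F$ is a Zariski-local weak equivalence: for any Zariski-local $\G$ on $\Sm_S^\aff$, Lemma \ref{lem:cocontinuous} makes $\mathbf{R}i_*\G$ Zariski-local on $\Sm_S$, so the derived adjunction yields
\[
\Map(i^*R_{\Zar}\F,\G) \simeq \Map(R_{\Zar}\F,\mathbf{R}i_*\G) \simeq \Map(\F,\mathbf{R}i_*\G) \simeq \Map(i^*\F,\G),
\]
where the middle equivalence uses that $\F \to R_{\Zar}\F$ is a Zariski-local equivalence and that $\mathbf{R}i_*\G$ is Zariski-local. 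A Zariski-local equivalence between Zariski-local presheaves is an objectwise weak equivalence, so evaluating at any $X\in\Sm_S^\aff$ produces the asserted equivalence $\F(X)\simeq R_{\Zar}\F(X)$.

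For part (ii), I would first observe that every affine Zariski square is an affine Nisnevich square: the open immersion $\Spec A_g\hookrightarrow\Spec A$ is \'etale, and the condition $(f,g)=A$ forces $A/f\to A_g/f$ to be an isomorphism. Thus affine Nisnevich excision implies affine Zariski excision and part (i) applies directly. I would then rerun the argument of part (i) with the Nisnevich topology in place of the Zariski topology, appealing to Theorem \ref{thm:cd} for the $\mathrm{AffNis}$ cd-structure together with Proposition \ref{prop:affineNisnevich}: this yields that $i^*\F$ is Nisnevich-local on $\Sm_S^\aff$, and hence that $\mathbf{R}i_* i^*\F$ is Nisnevich-local on $\Sm_S$ by Lemma \ref{lem:cocontinuous}. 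The argument of part (i) also shows that $\F\to\mathbf{R}i_* i^*\F$ is a Zariski-local weak equivalence whose target is already Zariski-local, so $\mathbf{R}i_* i^*\F$ represents $R_{\Zar}\F$ in the Zariski-local homotopy category; hence $R_{\Zar}\F$ is Nisnevich-local, as required.

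The main obstacle is the middle step of part (i), which asks us to pass from ``$i^*\F$ is Zariski-local on $\Sm_S^\aff$'' to ``$R_{\Zar}\F$ on $\Sm_S$ is computed by $\F$ on affines''. This requires threading together the derived adjunction $(i^*,\mathbf{R}i_*)$, the adjoint equivalence of Lemma \ref{lem:cocontinuous}, and Voevodsky's excision-descent equivalence of Theorem \ref{thm:cd}. Once this is in hand, part (ii) is a formal consequence, given the elementary observation that affine Zariski squares are a special case of affine Nisnevich squares.
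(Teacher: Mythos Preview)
Your proposal is correct and follows essentially the same approach as the paper: both arguments hinge on Theorem~\ref{thm:cd} together with Propositions~\ref{prop:affineZariski} and~\ref{prop:affineNisnevich} to make $i^*\F$ local, and then use the adjoint equivalence of Lemma~\ref{lem:cocontinuous} to transport locality back to $\Sm_S$. The only cosmetic difference is that in part~(ii) the paper works with $i^*R_{\Zar}\F$ (which is Nisnevich-local because it is objectwise equivalent to $i^*\F$ by part~(i)) and then invokes the equivalence $R_{\Zar}\F\simeq \mathbf{R}i_*i^*R_{\Zar}\F$, whereas you work with $\mathbf{R}i_*i^*\F$ directly and identify it with $R_{\Zar}\F$; these are equivalent moves via the naturality square for the unit of $(i^*,\mathbf{R}i_*)$.
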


\begin{proof}
(i) We consider the commutative square
\[
	\xymatrix{
	i^*(\F) \ar[r]\ar[d] & i^*(R_{\Zar}\F) \ar[d]\\
	R_{\Zar}i^*(\F) \ar[r] & R_{\Zar}i^*(R_{\Zar}\F)
	}
\]
in $\sPre(\Sm_S^\aff)$.
By Proposition~\ref{prop:affineZariski} and Theorem~\ref{thm:cd}, $i^*(\F)$ is Zariski-local, so the left vertical map is a weak equivalence.

By Lemma~\ref{lem:cocontinuous}, $i^*(R_{\Zar}\F)$ is Zariski-local, so the right vertical map is a weak equivalence.
Since $\mathbf{R}i_*$ preserves Zariski-local objects (Lemma~\ref{lem:cocontinuous}), $i^*$ preserves Zariski-local weak equivalences, so the bottom horizontal map is a weak equivalence.
Hence, the top horizontal map is a weak equivalence, as desired.

(ii) By Proposition~\ref{prop:affineNisnevich} and Theorem~\ref{thm:cd}, $i^*(\F)$ is Nisnevich-local.
 By part (i), we deduce that $i^*(R_{\Zar}\F)$ is Nisnevich-local. By Lemma~\ref{lem:cocontinuous}, $R_{\Zar}\F\simeq \mathbf Ri_*i^*(R_{\Zar}\F)$ and $\mathbf Ri_*$ preserves Nisnevich-local objects. Hence, $R_{\Zar}\F$ is Nisnevich-local.
\end{proof}

\section{Sites with interval and homotopy localization}
\label{s:homotopylocalization}
The main goal of this section is to establish Theorem~\ref{thm:singhasaffinebgproperty}, which is an abstract form of a result due originally to Schlichting.  Morel and Voevodsky initially constructed categories like the $\aone$-homotopy category in the more general context of ``sites with interval" and we work roughly in this setting.  We use a variant of the notion of an interval object that is slightly more restrictive than that considered by Morel--Voevodsky; see Section~\ref{ss:intervalsandsing} for more details.  The homotopy category of a site with interval is obtained as a Bousfield localization of the local model structure on simplicial presheaves on a site (with interval) which forces the interval object to be contractible.  Essentially, Theorem~\ref{thm:singhasaffinebgproperty} provides a convenient framework for producing fibrant objects in this further localized model structure, though it is not phrased in this way.  In fact, the proof of Theorem~\ref{thm:singhasaffinebgproperty} can be read independently of the main results of Section \ref{s:descent}.  Furthermore, Corollary~\ref{cor:SingAffineBG} places the general result in the form in which it will be used in the remainder of the paper.

\subsection{Interval objects and the singular construction}
\label{ss:intervalsandsing}
\begin{defn}
\label{defn:intervalobject}
Let ${\mathbf C}$ be a small category.  A {\em representable interval object} in $\mathbf C$ is a quadruple $(\I,m,i_0,i_1)$ consisting of a presheaf $\I$ on $\mathbf C$, a morphism $m: \I \times \I \to \I$ (``multiplication"), and morphisms $i_0,i_1: \ast \to \I$ (``endpoints") such that the following statements are true:
\begin{enumerate}[noitemsep,topsep=1pt]
\item for every $X\in\mathbf C$, the presheaf $X\times\I$ is representable;
\item if $p: \I \to \ast$ is the canonical morphism, then there are equalities of the form:
\begin{itemize}[noitemsep,topsep=1pt]
\item $m(i_0 \times id) = m(id \times i_0) = i_0p$, \text{ and }
\item $m(i_1 \times id) = m(id \times i_1) = id$;
\end{itemize}
\item the morphism $i_0 \coprod i_1: \ast \coprod \ast \to \I$ is a monomorphism.
\end{enumerate}
\end{defn}

\begin{rem}
The notion above is a special case of the notion of {\em interval object} studied in \cite[\S2.3 p. 85]{MV}, which omits the first condition.  The third condition is not needed in any of the arguments below and is included only to conform to the definition in \emph{loc.\ cit.}
\end{rem}

\begin{ex}
\label{ex:smoothschemes}
If $\mathbf{C} = \Sm_S$ or $\Sm_S^{\aff}$, then $\I = \aone_S$ with its usual monoid structure is a representable interval object.  Indeed, there is nothing to check in the former case.  In the latter case, since $\aone_S \to S$ is an affine morphism, if $X \in \Sm_S^{\aff}$, then the fiber product $X \times_S \aone_S$ is affine over $X$ and thus affine.
\end{ex}

\begin{defn}\label{dfn:I-invariant}
Let $\mathbf C$ be a small category and $\I$ a representable interval object in $\mathbf C$. Let $\mathcal S_\I$ be the set of all projections $X\times\I\to X$ with $X\in\mathbf C$.  We say that a simplicial presheaf on $\mathbf{C}$ is \emph{$\I$-invariant} if it is $\mathcal S_\I$-local. In other words, $\F\in\sPre(\mathbf C)$ is $\I$-invariant if, for every $X\in {\mathbf C}$, the projection $X\times \I \to X$ induces a weak equivalence $\F(X)\simeq \F(X\times \I)$.
\end{defn}

Using the two endpoints $i_0,i_1\colon \ast \to \I$, one defines a cosimplicial presheaf $\I^\bullet$ on $C$ with $\I^n= \I^{\times n}$ (see \cite[\S2.3.2 p. 88]{MV} where the notation $\Delta^{\bullet}_{\I}$ is used for this cosimplicial object).  We define the functor $\Sing{\I}\colon \sPre(\mathbf C)\to\sPre(\mathbf C)$ by the formula:
\[
\Sing{\I}\F(X) = \diag \F(X\times \I^\bullet).
\]
There is an obvious natural map $\F\to \Sing{\I}\F$, and for every $\F\in\sPre(\mathbf C)$:
\begin{itemize}[noitemsep,topsep=1pt]
	\item $\Sing{\I}\F$ is $\I$-invariant (see \cite[\S 2 Corollary 3.5]{MV});
	\item the map $\F\to \Sing{\I}\F$ is an $\mathcal S_\I$-equivalence (see \cite[\S 2 Corollary 3.8]{MV}).
\end{itemize}
These properties imply that $\Sing{\I}$ preserves weak equivalences and that the induced endofunctor of the homotopy category $\Ho(\sPre(\mathbf C))$ is left adjoint to the inclusion of the subcategory of $\I$-invariant simplicial presheaves.

\subsection{Excision and the singular construction}
\begin{lem}
\label{lem:diag}
Assume given a commutative diagram of bisimplicial sets of the form
\[
\xymatrix{
Z_*^{12} \ar[r] \ar[d] & Z_*^1 \ar[d]\\
Z_*^2 \ar[r] & Z_*^0
}
\]
that is degreewise homotopy Cartesian. If $\pi_0(Z_*^{0})$ and $\pi_0(Z_*^{1})$ are constant simplicial sets, then the induced diagram
\[
\xymatrix{
\diag(Z_*^{12}) \ar[r] \ar[d] & \diag(Z_*^1) \ar[d]\\
\diag(Z_*^2) \ar[r] & \diag(Z_*^0)
}
\]
is homotopy Cartesian.
\end{lem}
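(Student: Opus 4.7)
The strategy is to invoke a Bousfield--Friedlander-type realization theorem for bisimplicial sets, which gives conditions under which the diagonal functor preserves homotopy Cartesian squares. The role of the hypothesis that $\pi_0(Z_*^0)$ and $\pi_0(Z_*^1)$ are constant is to supply precisely the input needed for such a theorem to apply.

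First I would reduce to a strict pullback square. Factor the map $Z_*^1 \to Z_*^0$ degreewise as a trivial cofibration followed by a Kan fibration, obtaining a degreewise Kan fibration $\tilde Z_*^1 \to Z_*^0$ that is degreewise weakly equivalent to the original. The degreewise homotopy Cartesian hypothesis then yields a degreewise weak equivalence $Z_*^{12} \simeq Z_*^2 \times_{Z_*^0} \tilde Z_*^1$. Since the diagonal functor preserves degreewise weak equivalences (Jardine's realization lemma), it suffices to prove that the diagonal of this strict pullback square is a homotopy pullback of the diagonals.

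Next I would apply the Bousfield--Friedlander theorem (Theorem B.4 of their paper on $\Gamma$-spaces, spectra, and bisimplicial sets; see also Goerss--Jardine IV.4.9), which states that the diagonal of a degreewise pullback square along a degreewise Kan fibration of bisimplicial sets is homotopy Cartesian, provided (a) the base $Z_*^0$ satisfies the $\pi_*$-Kan condition, and (b) the induced map $\pi_0(\tilde Z_*^1) \to \pi_0(Z_*^0)$ of simplicial sets is a Kan fibration. Condition (b) is immediate from the constancy hypotheses, since any map between constant simplicial sets is automatically a Kan fibration.

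The main obstacle I anticipate is the verification of condition (a). The constancy of $\pi_0(Z_*^0)$ handles the $k=0$ case of the $\pi_*$-Kan condition directly, and allows the decomposition of $Z_*^0$ into components over which consistent basepoints may be selected. For $k\geq 1$, the $\pi_*$-Kan condition on the higher homotopy groups must be established by tracking the fiber sequences produced by the degreewise homotopy Cartesian structure together with the constancy of $\pi_0(Z_*^1)$, which ensures that basepoint choices are compatible across simplicial degrees. This is the technical heart of the Bousfield--Friedlander argument, and the constancy of the $\pi_0$'s is precisely the geometric input that makes the higher conditions tractable in our setting.
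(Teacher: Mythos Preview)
Your overall strategy is exactly the paper's: invoke the Bousfield--Friedlander theorem, using that a map of constant simplicial sets is automatically a Kan fibration for condition (b). The reduction to a strict pullback via degreewise fibrant replacement is a reasonable preliminary (though the paper leaves it implicit).

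There is, however, a genuine gap in your treatment of condition (a). First, the Bousfield--Friedlander theorem requires the $\pi_*$-Kan condition for \emph{both} $Z_*^0$ and $\tilde Z_*^1$, not just for the base; you should state this. More importantly, your sketch for verifying the $\pi_*$-Kan condition is confused: the condition is a property of a single bisimplicial set, and for $Z_*^0$ it has nothing to do with ``the fiber sequences produced by the degreewise homotopy Cartesian structure'' or with $\pi_0(Z_*^1)$. You seem to be conflating the hypothesis on the square with what is needed to check the condition on an individual object.

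The clean argument the paper gives is this: if $\pi_0(Z_*)$ is constant, then $Z_*$ decomposes as a coproduct, indexed by the set $\pi_0(Z_0)$, of bisimplicial sets $W_*$ with $\pi_0(W_m)=\ast$ for every $m$. Bousfield--Friedlander \S B.3 shows directly that any such degreewise-connected $W_*$ satisfies the $\pi_*$-Kan condition (indeed, for a simplicial group the relevant maps are surjective, and the degreewise-connected case reduces to this). Since the $\pi_*$-Kan condition is preserved under coproducts, $Z_*$ satisfies it. This applies verbatim to both $Z_*^0$ and $Z_*^1$ under your hypotheses. Your phrase ``decomposition of $Z_*^0$ into components'' is the right first move; you should follow it with the citation to \S B.3 rather than appealing to the square.
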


\begin{proof}
This is a consequence of the Bousfield--Friedlander theorem \cite[\S B.4]{BousfieldFriedlander}.  By assumption, the map $\pi_0(Z_*^1)\to \pi_0(Z_*^0)$ is a map of constant simplicial sets and thus a fibration.  It remains to show that a bisimplicial set $Z_*$ such that $\pi_0(Z_*)$ is constant satisfies the $\pi_*$-Kan condition.  Such a bisimplicial set can be written as a coproduct, indexed by the set $\pi_0(Z_0)$, of bisimplicial sets $W_*$ with $\pi_0(W_m)=*$ for all $m$.  It is proved in \cite[\S B.3]{BousfieldFriedlander} that such a $W_*$ satisfies the $\pi_*$-Kan condition. To conclude, one observes that the $\pi_*$-Kan condition is preserved under coproducts.
\end{proof}

\begin{rem}
\label{rem:rezk}
In \cite[Proposition 5.4]{Rezk}, it is shown that the conclusion of Lemma~\ref{lem:diag} holds assuming only that $\pi_0(Z_*^0)$ is constant. This also follows from \cite[Lemma 5.5.6.17]{HA} applied to the $\infty$-topos of presheaves on $\pi_0(Z_0^0)$.
\end{rem}

The following result, which abstracts some ideas of \cite[\href{http://arxiv.org/pdf/1502.05424v2.pdf}{Proof of Theorem 6.22}]{Schlichting}, gives a general procedure to produce spaces having the affine Nisnevich excision property.

\begin{thm}
\label{thm:singhasaffinebgproperty}
Let $\mathbf{C}$ be a small category with a strictly initial object, $\I$ a representable interval object in $\mathbf C$, and $P$ a cd-structure on $\mathbf{C}$ such that
\[
Q\in P\Rightarrow Q\times \I\in P.
\]
If $\F$ is a simplicial presheaf on $\mathbf{C}$ satisfying $P$-excision such that $\pi_0(\F)$ is $\I$-invariant, then $\Sing{\I}\F$ also satisfies $P$-excision.
\end{thm}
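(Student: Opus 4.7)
The plan is to verify the two conditions of Definition~\ref{defn:excision} directly for $\Sing{\I}\F$. The first condition, that $\Sing{\I}\F(\emptyset)$ is weakly contractible, is immediate from strictness of $\emptyset$: the projection $\emptyset \times \I^n \to \emptyset$ is a morphism to a strictly initial object and hence an isomorphism, so $\F(\emptyset \times \I^n) = \F(\emptyset)$ for every $n$. The bisimplicial set $(n,m) \mapsto \F(\emptyset \times \I^n)_m$ is therefore constant in $n$, and its diagonal is $\F(\emptyset)$, weakly contractible by $P$-excision of $\F$.

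For the second condition, fix a $P$-square $Q$ as in~\eqref{eqn:square}. Iterating the closure property $(Q \in P) \Rightarrow (Q \times \I \in P)$ gives $Q \times \I^n \in P$ for every $n \geq 0$, whence by $P$-excision of $\F$ each square $\F(Q \times \I^n)$ is homotopy Cartesian. Assembling these in $n$ yields a commutative square of bisimplicial sets
\[
\xymatrix{
\F(W \times \I^\bullet) \ar[r]\ar[d] & \F(V \times \I^\bullet) \ar[d]\\
\F(U \times \I^\bullet) \ar[r] & \F(X \times \I^\bullet)
}
\]
that is degreewise (in $n$) homotopy Cartesian and whose diagonal is $\Sing{\I}\F(Q)$ by the very definition of $\Sing{\I}$. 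I would then invoke Lemma~\ref{lem:diag} to pass from degreewise homotopy Cartesianness to homotopy Cartesianness of the diagonals.

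The only nonformal step is verifying the hypothesis of Lemma~\ref{lem:diag}, namely that $n \mapsto \pi_0\F(X\times\I^n)$ and $n \mapsto \pi_0\F(V\times\I^n)$ are constant simplicial sets; this is precisely where the hypothesis on $\pi_0(\F)$ enters. For any $Y\in\mathbf C$, the iterated projections $Y\times\I^n\to Y$ assemble into a map of cosimplicial objects from $Y\times\I^\bullet$ to the constant cosimplicial object with value $Y$; applying $\pi_0(\F)$ produces a map of simplicial sets from the constant simplicial set $\pi_0\F(Y)$ into $n\mapsto \pi_0\F(Y\times\I^n)$ which, by $\I$-invariance of $\pi_0(\F)$, is a bijection in every simplicial degree. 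Hence $n\mapsto \pi_0\F(Y\times\I^n)$ is canonically isomorphic to the constant simplicial set on $\pi_0\F(Y)$. The crux of this approach---and the only real obstacle---is isolating the constancy condition from Lemma~\ref{lem:diag} and recognizing it as the content of the $\I$-invariance hypothesis on $\pi_0(\F)$; the closure $Q\times\I\in P$ then propagates homotopy Cartesianness in the cosimplicial direction, and the diagonal lemma assembles everything.
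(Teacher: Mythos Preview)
Your proposal is correct and follows essentially the same route as the paper: verify the initial-object condition via strictness, then for a $P$-square $Q$ use $Q\times\I^n\in P$ to get a degreewise homotopy Cartesian bisimplicial square, invoke $\I$-invariance of $\pi_0(\F)$ to obtain the constancy hypothesis, and apply Lemma~\ref{lem:diag}. One cosmetic slip: since $\F$ is a presheaf, the arrows in your displayed bisimplicial square should be reversed (so that $\F(X\times\I^\bullet)$ is the source rather than the target), but this does not affect the argument, as homotopy Cartesianness is insensitive to this reflection and your constancy argument applies to every $Y\in\mathbf C$.
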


\begin{proof}
	Let $\emptyset\in \mathbf{C}$ be the initial object. Since $\emptyset\times \I^n$ maps to $\emptyset$, it is initial. It follows that $\Sing{\I}\F(\emptyset)$ is weakly contractible. Let $Q$ be a square
	\[
	\xymatrix{
	W \ar[r] \ar[d] & V \ar[d] \\
	U \ar[r] & X
	}
	\]
	in $P$. The square $\Sing{\I}\F(Q)$ is the diagonal of the square of bisimplicial sets
	\[
	\xymatrix{
	\F(X\times \I^\bullet) \ar[r] \ar[d] & \F(U\times \I^\bullet) \ar[d] \\
	\F(V\times \I^\bullet) \ar[r] & \F(W\times \I^\bullet)
	}
	\]
	which is degreewise homotopy Cartesian by the assumption on $P$. Moreover, since $\pi_0(\F)$ is $\I$-invariant, the $\pi_0$ of each entry is a constant simplicial set. By Lemma~\ref{lem:diag}, we deduce that $\Sing{\I}\F(Q)$ is homotopy Cartesian.
\end{proof}

\begin{cor}
\label{cor:SingAffineBG}
Let $\F$ be a simplicial presheaf on $\Sm_S^{\aff}$. Suppose that $\F$ satisfies affine Zariski (resp.\ affine Nisnevich) excision and that $\pi_0(\F)$ is $\A^1$-invariant. Then $\Singaone\F$ satisfies affine Zariski (resp.\ affine Nisnevich) excision.
\end{cor}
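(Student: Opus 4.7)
The plan is to deduce this corollary directly from Theorem \ref{thm:singhasaffinebgproperty} applied to $\mathbf{C} = \Sm_S^{\aff}$ with the interval object $\I = \A^1_S$ (which is representable in $\Sm_S^{\aff}$ by Example \ref{ex:smoothschemes}) and $P$ taken to be either the affine Zariski or the affine Nisnevich cd-structure of Example \ref{ex:cdstructures}. Once the hypotheses of that theorem are checked, the conclusion is immediate since the $P$-excision property is exactly what we want.

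First I would verify that $\Sm_S^{\aff}$ has a strictly initial object. The empty scheme $\emptyset = \Spec(0)$ is affine and smooth over $S$ (vacuously), and it is initial in $\Sm_S^{\aff}$. It is strictly initial because any morphism $X \to \Spec(0)$ forces $\O_X$ to receive a unit map from the zero ring, so $X = \emptyset$ as well.

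The main verification is the stability condition $Q \in P \Rightarrow Q \times \A^1 \in P$ for each of the two cd-structures. For the affine Zariski cd-structure, starting from a square associated to elements $f, g \in A$ with $(f,g) = A$, taking the product with $\A^1_S$ replaces $A$ by $A[t]$ and the images of $f, g$ in $A[t]$ still generate the unit ideal; since localization commutes with the polynomial construction, the resulting square is precisely the affine Zariski square over $A[t]$ determined by these elements. For the affine Nisnevich cd-structure, starting from an étale map $A \to B$ inducing $A/f \cong B/f$, base change to $A[t]$ preserves étaleness, and the isomorphism $A[t]/f \cong B[t]/f$ follows from the identification $(A/f)[t] \cong (B/f)[t]$.

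With these observations in place, Theorem \ref{thm:singhasaffinebgproperty} applies (in both versions) to $\F$ using the given hypotheses that $\F$ satisfies $P$-excision and that $\pi_0(\F)$ is $\A^1$-invariant, yielding that $\Singaone \F$ satisfies $P$-excision. I do not anticipate any real obstacle here; the only subtle point is ensuring that the empty affine scheme is genuinely initial and strictly so, which is a routine check, and that the cd-structures are closed under product with $\A^1_S$, which reduces to the trivial algebraic facts above.
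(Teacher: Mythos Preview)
Your proposal is correct and follows exactly the same approach as the paper: the paper's proof simply says the result follows immediately from Theorem~\ref{thm:singhasaffinebgproperty} via Example~\ref{ex:smoothschemes}, leaving implicit precisely the verifications (strict initiality of $\emptyset$ and stability of the cd-structures under $-\times\A^1$) that you have spelled out. Your added details are all correct and routine, so there is nothing to add.
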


\begin{proof}
Via Example~\ref{ex:smoothschemes}, this result follows immediately from Theorem~\ref{thm:singhasaffinebgproperty}.
\end{proof}

\section{$\A^1$-representability results for vector bundles}
\label{s:representability}
The goal of this section is to establish our general representability result in the $\A^1$-homotopy category and then to specialize this result to obtain the geometric representation of vector bundles on smooth affine schemes discussed in Theorem~\ref{thmintro:main}.  We begin by establishing Theorem~\ref{thm:representability}, which contains a general ``affine representability result".  This result relies on the main results of the previous sections, namely Theorem~\ref{thm:affinedescent} and Corollary~\ref{cor:SingAffineBG}.  Finally, we establish Theorem \ref{thm:geometricrepresentation}, which, building on some commutative algebra results related to the Bass--Quillen conjecture (see Theorem \ref{thm:lindelpopescu}), is deduced by suitably specializing Theorem~\ref{thm:representability} to classifying spaces for vector bundles.

\subsection{A general representability result}
We now apply our results in the context of $\aone$-homotopy theory.  The $\aone$-homotopy category $\ho{S}$ is constructed as the homotopy category of a left Bousfield localization of $\sPre(\Sm_S)$. Recall from Definitions \ref{dfn:t-local} and \ref{dfn:I-invariant} that $\mathcal S_{\Nis}$ is the set of Nisnevich covering sieves $R\hookrightarrow X$ and that $\mathcal S_{\aone}$ is the set of projections $X\times\A^1\to X$. We define
\[
\ho{S} := \Ho(({\mathcal S_{\Nis}\cup \mathcal S_{\aone}})^{-1}\sPre(\Sm_S)).
\]
Using the adjunction described at the beginning of Section \ref{ss:descent}, we may identify $\ho{S}$ with the full subcategory of $\Ho(\sPre(\Sm_S))$ consisting of Nisnevich-local and $\aone$-invariant simplicial presheaves. If $\mathscr X,\mathscr Y\in\sPre(\Sm_S)$, we set
\[
[\mathscr X,\mathscr Y]_{\A^1} := \hom_{\ho{S}}(\mathscr X,\mathscr Y).
\]
Recall that this can be computed as follows: if $\mathscr Y\to \mathscr Y'$ is an $({\mathcal S_{\Nis}\cup \mathcal S_{\aone}})$-equivalence where $\mathscr Y'$ is Nisnevich-local and $\aone$-invariant, then
\[
[\mathscr X,\mathscr Y]_{\A^1} \cong \hom_{\Ho(\sPre(\Sm_S))}(\mathscr X,\mathscr Y').
\]
In particular, if $X\in\Sm_S$, then $[X,\mathscr Y]_{\A^1}\cong\pi_0\mathscr Y'(X)$.

\begin{rem}
Note that our definition of $\ho{S}$ differs slightly from that studied in \cite[\S3,2.1]{MV}.  Indeed, Morel and Voevodsky define the $\aone$-homotopy category as the homotopy category of a left Bousfield localization of the Joyal model structure on simplicial Nisnevich sheaves on $\Sm_S$. However, they assume from the outset that the base scheme $S$ is Noetherian and has finite Krull dimension.  Under these assumptions, their category is equivalent to ours (see Remark \ref{rem:t-local}).
\end{rem}

\begin{lem}
\label{lem:affineA1invariance}
Let $\F$ be a Zariski-local simplicial presheaf on $\Sm_S$. If $\F$ is $\A^1$-invariant on affines (i.e., for any $X \in \Sm_S^\aff$, the map $\F(X) \to \F(X \times \aone)$ is a weak equivalence), then $\F$ is $\A^1$-invariant.
\end{lem}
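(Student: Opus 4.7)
The plan is to prove $\aone$-invariance on arbitrary $X\in\Sm_S$ by a two-stage Zariski descent argument, leveraging that $\F$, being Zariski-local, can be computed on any $X$ as the homotopy limit over the Čech nerve of any affine Zariski cover, by Lemma~\ref{lem:descent} (equivalently, by Theorem~\ref{thm:cd} applied to the Zariski cd-structure). I would first reduce to the case where $X$ is separated, and then treat the separated case by choosing an affine cover whose iterated fibre products are still affine.

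Stage 1 (separated $X$). If $X\in\Sm_S^\sep$, pick a finite affine Zariski cover $\{U_i\to X\}_{i\in I}$. Because $X$ is separated, every iterated fibre product $U_{i_0}\times_X\dotsb\times_X U_{i_n}$ is again affine, and the corresponding cover $\{U_i\times\aone\to X\times\aone\}$ of $X\times\aone$ has the same property. Zariski descent for $\F$ thus yields weak equivalences
\[
\F(X)\;\simeq\;\operatorname{holim}_{n\in\Delta}\prod_{i_0,\dotsc,i_n}\F(U_{i_0}\times_X\dotsb\times_X U_{i_n}),
\]
and analogously for $X\times\aone$ (with each factor replaced by its product with $\aone$). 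Pullback along the projections $U_{i_0}\times_X\dotsb\times_X U_{i_n}\times\aone\to U_{i_0}\times_X\dotsb\times_X U_{i_n}$ is a levelwise weak equivalence by the affine $\aone$-invariance hypothesis, so taking $\operatorname{holim}$ gives that $\F(X)\to\F(X\times\aone)$ is a weak equivalence.

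Stage 2 (general $X$). For arbitrary $X\in\Sm_S$, pick an affine Zariski cover $\{V_j\to X\}_{j\in J}$. Since affine schemes are separated (over $\Spec\Z$, hence absolutely), any iterated fibre product $V_{j_0}\times_X\dotsb\times_X V_{j_n}$ is an open subscheme of the separated scheme $V_{j_0}$, and therefore separated. The same holds for the cover $\{V_j\times\aone\}$ of $X\times\aone$. Applying Zariski descent as in Stage~1 and invoking Stage~1 termwise, one concludes that $\F(X)\to\F(X\times\aone)$ is a weak equivalence.

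I do not anticipate any real obstacle: the key point is just that every scheme in $\Sm_S$ admits an affine cover whose Čech nerve consists of separated schemes, and every separated scheme admits an affine cover whose Čech nerve consists of affine schemes, so two applications of Zariski descent propagate $\aone$-invariance from $\Sm_S^\aff$ through $\Sm_S^\sep$ to $\Sm_S$. The only thing to double-check is that the covers involved are small enough for Lemma~\ref{lem:descent} to apply, which is automatic because schemes in $\Sm_S$ are quasi-compact and quasi-separated so the covers can be chosen finite.
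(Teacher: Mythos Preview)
Your proof is correct and is essentially the same argument as the paper's, just unpackaged: the paper observes that $\F(-)\to\F(-\times\aone)$ is a map between Zariski-local simplicial presheaves (the target is Zariski-local because products with $\aone$ preserve Zariski squares) and then invokes Lemma~\ref{lem:cocontinuous}, whose conservativity statement is proved by exactly your two-stage \v Cech descent from affines to separated schemes to all of $\Sm_S$. You have simply inlined that part of the proof of Lemma~\ref{lem:cocontinuous}.
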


\begin{proof}
	We must show that the map $\F(-) \to \F(-\times\A^1)$ is a weak equivalence. This is a map between Zariski-local simplicial presheaves that is a weak equivalence on affines, hence it is a weak equivalence by Lemma~\ref{lem:cocontinuous}.
\end{proof}

\begin{thm}
\label{thm:representability}
Let $\F$ a simplicial presheaf on $\Sm_S$. Suppose that
\begin{enumerate}[noitemsep,topsep=1pt]
	\item[(i)] $\F$ satisfies affine Nisnevich excision;
	\item[(ii)] $\pi_0(\F)$ is $\A^1$-invariant on affine schemes.
\end{enumerate}
Then $R_{\Zar}\Singaone\F$ is Nisnevich-local and $\A^1$-invariant. Moreover, for every $U\in\Sm_S^\aff$, the canonical map
\[
\pi_0\F(U) \longrightarrow [U,\F]_{\A^1}
\]
is an isomorphism.
\end{thm}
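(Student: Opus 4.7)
The plan is to verify that $R_{\Zar}\Singaone\F$ is an $\A^1$-fibrant replacement of $\F$, and then to compute $\pi_0$ of its value on an affine $U$. First I would apply Corollary~\ref{cor:SingAffineBG} to the restriction $i^*\F\in\sPre(\Sm_S^\aff)$: hypotheses (i) and (ii) assert precisely that $i^*\F$ satisfies affine Nisnevich excision and that $\pi_0(i^*\F)$ is $\A^1$-invariant. Since the $\Singaone$ construction is pointwise and hence commutes with $i^*$, this shows that $\Singaone\F$ satisfies affine Nisnevich excision on $\Sm_S$, so Theorem~\ref{thm:affinedescent}(ii) yields that $R_{\Zar}\Singaone\F$ is Nisnevich-local.

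For $\A^1$-invariance, note that $R_{\Zar}\Singaone\F$ is Zariski-local by construction, so by Lemma~\ref{lem:affineA1invariance} it suffices to verify $\A^1$-invariance on affine schemes. Every affine Zariski square is an affine Nisnevich square, so $\Singaone\F$ also satisfies affine Zariski excision, and Theorem~\ref{thm:affinedescent}(i) supplies a weak equivalence $\Singaone\F(U)\xrightarrow{\sim} R_{\Zar}\Singaone\F(U)$ for every $U\in\Sm_S^\aff$. The $\A^1$-invariance of $\Singaone\F$ thus transfers to $R_{\Zar}\Singaone\F$ on affines, and hence globally.

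The composite $\F\to\Singaone\F\to R_{\Zar}\Singaone\F$ is then an $(\mathcal S_{\Nis}\cup\mathcal S_{\A^1})$-equivalence with target injectively fibrant, Nisnevich-local, and $\A^1$-invariant; it is therefore a fibrant replacement of $\F$ in the $\A^1$-local model structure. Consequently $[U,\F]_{\A^1}\cong\pi_0(R_{\Zar}\Singaone\F(U))$, and Theorem~\ref{thm:affinedescent}(i) again identifies this with $\pi_0(\Singaone\F(U))$ for any $U\in\Sm_S^\aff$.

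The remaining step, and the main (though mild) obstacle, is to show that the natural map $\pi_0\F(U)\to\pi_0(\Singaone\F(U))$ induced by the inclusion $\F(U)=\F(U\times\A^0)\hookrightarrow\diag\F(U\times\A^\bullet)$ is a bijection; this is where hypothesis (ii) is genuinely used. Surjectivity is immediate, since every $0$-simplex of the diagonal lies in $\F(U)_0$. For injectivity, suppose $x,y\in\F(U)_0$ become equal in $\pi_0(\Singaone\F(U))$, witnessed by some $w\in\F(U\times\A^1)_1$ whose two diagonal faces are $x$ and $y$; writing $u=d_0^vw$ and $v=d_1^vw$ for the internal faces of $w$ in the simplicial set $\F(U\times\A^1)$, one has $[u]=[v]$ in $\pi_0\F(U\times\A^1)$, while $[x]=i_0^*[u]$ and $[y]=i_1^*[v]$ in $\pi_0\F(U)$, where $i_0^*,i_1^*\colon\pi_0\F(U\times\A^1)\to\pi_0\F(U)$ are induced by the two endpoint inclusions $U\to U\times\A^1$. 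By $\A^1$-invariance of $\pi_0\F$ these two maps coincide (both being inverse to the projection-induced bijection), forcing $[x]=[y]$ in $\pi_0\F(U)$.
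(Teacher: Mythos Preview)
Your proof is correct and follows essentially the same route as the paper's own argument: apply Corollary~\ref{cor:SingAffineBG} to obtain affine Nisnevich excision for $\Singaone\F$, then use both parts of Theorem~\ref{thm:affinedescent} and Lemma~\ref{lem:affineA1invariance} exactly as you do. The paper simply asserts the final identification $\pi_0\F(U)\cong\pi_0\Singaone\F(U)$ as a consequence of hypothesis~(ii), whereas you spell out the argument; one small point to make explicit is that equality in $\pi_0$ of the diagonal is witnessed by a \emph{zigzag} of $1$-simplices rather than a single edge, but your single-edge argument iterates immediately along such a zigzag since all intermediate vertices lie in $\F(U)_0$.
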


\begin{proof}
By Corollary~\ref{cor:SingAffineBG}, $\Singaone\F$ satisfies affine Nisnevich excision.  Therefore, applying Theorem~\ref{thm:affinedescent} (ii),
we conclude that $R_{\Zar}\Singaone\F$ is Nisnevich-local. On the other hand, Theorem~\ref{thm:affinedescent} (i) shows that the map
\begin{equation}\label{eqn:representability}
	\Singaone\F(U) \to R_{\Zar} \Singaone\F(U)
\end{equation}
is a weak equivalence for every affine $U$.  Since $\Singaone\F$ is $\aone$-invariant by definition, it follows immediately that $R_{\Zar}\Singaone\F$ is $\A^1$-invariant on affines.  Applying Lemma~\ref{lem:affineA1invariance} we conclude that $R_{\Zar}\Singaone\F$ is also $\A^1$-invariant.  Now, taking the connected components of \eqref{eqn:representability}, we see that for $U$ affine,
\[
\pi_0\Singaone\F(U)\cong [U,\F]_{\A^1}.
\]
Moreover, condition (ii) of the statement implies that
\[
\pi_0\F(U)\cong \pi_0\Singaone\F(U)
\]
for affine $U$, and this is precisely what we wanted to prove.
\end{proof}

\begin{rem}
	Condition (ii) of Theorem~\ref{thm:representability} cannot be dropped: even if $\F$ is a Nisnevich-local simplicial presheaf, $R_{\Nis}\Singaone\F$ need not be $\A^1$-invariant. One example of this phenomenon is given in \cite[\S 3 Example 2.7]{MV}.
	For an example where $\F$ is a smooth projective scheme over $\cplx$, see \cite[\href{http://arxiv.org/pdf/1312.6388.pdf}{\S 4.1}]{BalweHogadiSawant}.
\end{rem}

\subsection{Homotopy invariance and geometric representability}
Suppose $r \geq 0$ is an integer.  If $X$ is a scheme, write $\mathscr{V}_r(X)$ for the set of isomorphism classes of rank $r$ vector bundles on $X$.  Let $A$ be a commutative ring.  Say that the Bass--Quillen conjecture holds for $A$ in rank $r$ if, for every integer $n\geq 0$, the map
\[
\mathscr{V}_r(\Spec A) \longrightarrow \mathscr{V}_r(\Spec A[t_1,\ldots,t_n])
\]
is a bijection. We recall the following result regarding the Bass--Quillen conjecture.

\begin{thm}[Lindel, Popescu]
\label{thm:lindelpopescu}
Let $A$ be a ring such that, for every maximal ideal $\mathfrak{m}\subset A$, $A_{\mathfrak{m}}$ is ind-smooth over a Dedekind ring with perfect residue fields (for example, $A_{\mathfrak{m}}$ is Noetherian and regular over such a Dedekind ring). Then the Bass--Quillen conjecture holds for $A$ in every rank $r \geq 0$.
\end{thm}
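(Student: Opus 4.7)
The plan is to reduce the theorem to two classical ingredients from commutative algebra: Quillen's local-to-global patching principle, and Lindel's theorem on the Bass--Quillen conjecture for essentially smooth local algebras, the latter used in combination with Popescu's desingularization theorem to handle the ind-smooth hypothesis. I identify sections of $\mathscr{V}_r(\Spec R)$ with isomorphism classes of finitely generated projective $R$-modules of constant rank $r$, and I call such a module \emph{extended from $R$} if it has the form $P_0 \otimes_R R[t_1,\ldots,t_n]$ for some $R$-module $P_0$.

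Injectivity is essentially free: the ring map $A[t_1,\ldots,t_n]\to A$ sending each $t_i$ to $0$ is a retraction of the inclusion $A\hookrightarrow A[t_1,\ldots,t_n]$, so any two rank-$r$ projective $A$-modules that become isomorphic after base change to $A[t_1,\ldots,t_n]$ must already be isomorphic over $A$. The entire substance of the statement is therefore the surjectivity claim: every finitely generated projective $A[t_1,\ldots,t_n]$-module is extended from $A$.

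For surjectivity, the first step is Quillen's patching theorem, which asserts that a finitely generated projective $P$ over $A[t_1,\ldots,t_n]$ is extended from $A$ if and only if $P_\mathfrak m$ is extended from $A_\mathfrak m$ for every maximal ideal $\mathfrak m\subset A$. Since the hypothesis on $A$ is stated pointwise on $\Spec A$, this reduces matters to the case where $A$ is local and ind-smooth over the Dedekind ring $D$, in which case ``extended'' is the same as ``free''. Next, Popescu's desingularization theorem lets me write such a local $A$ as a filtered colimit of essentially smooth finitely presented $D$-algebras $R_i$; since finitely presented projective modules are compact, any projective over $A[t_1,\ldots,t_n]$ descends to some $R_i[t_1,\ldots,t_n]$. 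The problem therefore reduces to Lindel's theorem for essentially smooth local $D$-algebras, which states precisely that every finitely generated projective module over $R_i[t_1,\ldots,t_n]$ is free.

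The main obstacle is Lindel's theorem itself: its proof hinges on a delicate geometric argument (``Lindel's trick'') that produces étale coordinate charts reducing the problem over an essentially smooth local $D$-algebra to the case of a polynomial ring over $D$, where freeness follows from Quillen's solution of Serre's conjecture. Extending Lindel's theorem from smooth algebras over a field to smooth algebras over a Dedekind ring with perfect residue fields introduces additional complications in positive residue characteristic, and it is here that the perfectness hypothesis on the residue fields of $D$ enters in an essential way.
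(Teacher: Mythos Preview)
Your argument is correct and follows the same route as the paper: Quillen patching to reduce to the local case, a filtered-colimit argument to pass from ind-smooth to (essentially) smooth over $D$, and then the Lindel--Popescu result for smooth algebras over a Dedekind ring with perfect residue fields. One small correction: writing an ind-smooth $D$-algebra as a filtered colimit of smooth $D$-algebras is the \emph{definition} of ind-smooth, not a consequence of Popescu's desingularization theorem --- the paper invokes Popescu's theorem instead to justify the parenthetical example in the statement (that a Noetherian ring regular over $D$ is automatically ind-smooth over $D$), which your write-up does not address.
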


\begin{proof}
The fact that every regular ring map (see, e.g., \cite[\href{http://stacks.math.columbia.edu/tag/07BY}{Tag 07BY Definition 15.32.1}]{stacks-project} for a definition) between Noetherian rings is ind-smooth is Popescu's desingularization theorem \cite[\href{http://stacks.math.columbia.edu/tag/07GC}{Tag 07GC Theorem 16.12.1}]{stacks-project}.
By Quillen's patching theorem \cite[Theorem V.1.6]{Lam}, we may assume that $A$ is ind-smooth over a Dedekind ring $R$ with perfect residue fields. Since any finitely generated projective module over a filtered colimit of rings is extended from some ring in the diagram, we may further assume that $A$ is smooth over $R$. In that case, we can apply \cite[Theorem 4.1]{Popescu} (see also \cite[Theorem 2.2]{SwanPopescu} for clarifications).
\end{proof}

\begin{ex}
The Bass--Quillen conjecture holds for any regular Noetherian ring $A$ such that for every prime $p \in \Z$, $A/pA$ is regular.  Indeed, such a ring is a direct product of rings that are regular over $\Z$ or over $\mathbb{F}_p$ for some prime $p$.
\end{ex}

For $U\in\Sm_S$, write $\mathbf{Vect}_r(U)$ for the groupoid of rank $r$ vector bundles over $U$.   The groupoids $\mathbf{Vect}_r(U)$ are the fibers of a category fibered in groupoids over $\Sm_S$ that we denote by $\mathbf{Vect}_r$. These fibers are only functorial in $U$ in a weak sense, but this is easily rectified by considering the equivalent groupoids $\Gamma(\Sm_S/U,\mathbf{Vect}_r)$, which are strictly functorial in $U$;  this trick is due to Grayson and we refer the reader to \cite[p.~169-170]{Grayson} or \cite[C.4]{FriedlanderSuslin} for more details. We write $B\mathbf{Vect}_r$ for the simplicial presheaf on $\Sm_S$ whose value on $U$ is the nerve of the latter groupoid. Thus, by construction, $\pi_0 B\mathbf{Vect}_r$ is isomorphic to the presheaf $U\mapsto \mathscr V_r(U)$.

Let $\Gr_r$ be the infinite Grassmannian $\colim_N \Gr_{r,N}$, considered as a colimit of representable presheaves on $\Sm_S$. The tautological vector bundles on the Grassmannians $\Gr_{r,N}$ induce a morphism
\[
\gamma_r\colon \Gr_r \to B\mathbf{Vect}_r
\]
in $\sPre(\Sm_S)$. By \cite[\S4, Proposition 2.6]{MV}, $\gamma_r$ is an isomorphism in $\ho{S}$.\footnote{In \cite{MV}, it is assumed that $S$ is Noetherian and has finite Krull dimension, but this is not used in the proof.}
For any $X\in\Sm_S$, composition with $\gamma_r$ gives a map $\hom(X,\Gr_r)\to \mathscr V_r(X)$, which is surjective if $X$ is affine.

\begin{thm}[Affine representability for vector bundles]
\label{thm:geometricrepresentation}
Let $r\geq 0$ be an integer. Suppose that, for every $X\in\Sm_S^\aff$, the Bass--Quillen conjecture holds for $\O(X)$ in rank $r$. Then, for every $X\in\Sm_S^\aff$, there is a bijection
\[
\mathscr{V}_r(X) \cong [X,\Gr_r]_{\aone}
\]
such that the following triangle commutes:
\[
\xymatrix{
\hom(X,\Gr_r) \ar[dr] \ar@{->>}[d] & \\
\ar[r]^-\cong \mathscr{V}_r(X) & [X,\Gr_r]_{\aone}.
}
\]
In particular, this holds for all $r$ if $S$ is ind-smooth over a Dedekind ring with perfect residue fields.
\end{thm}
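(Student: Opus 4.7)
The plan is to apply Theorem \ref{thm:representability} to the simplicial presheaf $\F = B\mathbf{Vect}_r$ on $\Sm_S$ and then to transport the resulting bijection $\mathscr{V}_r(X) \cong [X, B\mathbf{Vect}_r]_{\aone}$ along the $\ho{S}$-isomorphism $\gamma_r\colon \Gr_r \isomto B\mathbf{Vect}_r$ recalled just before the statement. Provided the two hypotheses of Theorem \ref{thm:representability} hold, the conclusion is immediate: for $X \in \Sm_S^\aff$, one has $\pi_0 B\mathbf{Vect}_r(X) = \mathscr{V}_r(X)$ by construction, so composition with $\gamma_r$ yields $\mathscr{V}_r(X) \cong [X, B\mathbf{Vect}_r]_{\aone} \cong [X, \Gr_r]_{\aone}$. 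Commutativity of the triangle follows from the fact that $\gamma_r$ is itself induced by the tautological bundles on the Grassmannians $\Gr_{r,N}$, so the composite $\hom(X, \Gr_r) \to [X, \Gr_r]_{\aone} \cong \mathscr{V}_r(X)$ is again pullback of the tautological bundle.

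It remains to verify the two hypotheses of Theorem \ref{thm:representability}. Hypothesis (ii), that $\pi_0 B\mathbf{Vect}_r = \mathscr{V}_r$ is $\aone$-invariant on affines, is the $n=1$ instance of the Bass--Quillen conjecture for $\O(X)$, which is precisely the assumption of the theorem. Hypothesis (i), affine Nisnevich excision for $B\mathbf{Vect}_r$, requires that for every affine Nisnevich square
\[
\xymatrix{
\Spec(B_f) \ar[r] \ar[d] & \Spec(B) \ar[d] \\
\Spec(A_f) \ar[r] & \Spec(A),
}
\]
the induced square of nerves of groupoids of rank $r$ vector bundles is homotopy Cartesian. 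This is a groupoid-enhanced form of classical Nisnevich descent for vector bundles: the standard patching statement (compare \cite[V.1]{Lam}) asserts that giving a rank $r$ vector bundle on $\Spec(A)$ is equivalent to giving rank $r$ vector bundles on $\Spec(A_f)$ and $\Spec(B)$ together with an isomorphism of their pullbacks to $\Spec(B_f)$, and this equivalence of groupoids is precisely the statement that the square is a homotopy pullback of nerves.

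The final sentence of the theorem then follows by combining the above with Theorem \ref{thm:lindelpopescu}, applied to the local rings of $\O(X)$ for $X \in \Sm_S^\aff$. The principal technical obstacle is the verification of (i): while descent for isomorphism classes of vector bundles is essentially elementary, some care is needed to identify the groupoid-valued pullback with the \emph{homotopy} pullback of the associated nerves, i.e., to keep track of automorphisms when gluing.
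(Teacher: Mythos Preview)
Your overall strategy---apply Theorem~\ref{thm:representability} to $B\mathbf{Vect}_r$, then transport along $\gamma_r$---is exactly the paper's, and your treatment of hypothesis~(ii) and of the triangle commutativity matches the paper's argument.

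The one place where you diverge is the verification of hypothesis~(i), and this is also the place you yourself flag as a ``technical obstacle.'' The paper avoids the ad~hoc patching argument entirely: rather than checking affine Nisnevich excision directly, it observes that $\mathbf{Vect}_r$ is a stack for the \'etale topology (by faithfully flat descent), invokes \cite[Theorem~1.1]{Hollander} to conclude that $B\mathbf{Vect}_r$ is \'etale-local in the sense of Definition~\ref{dfn:t-local}, and then appeals to Theorem~\ref{thm:cd} to deduce that $B\mathbf{Vect}_r$ satisfies (full) Nisnevich excision, which in particular gives affine Nisnevich excision. This route is cleaner precisely because it packages the passage from ``stack/descent datum'' to ``homotopy pullback of nerves'' into the general statement of Hollander's theorem, so you never have to unwind the groupoid-versus-homotopy-pullback issue by hand. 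Your direct argument can certainly be made to work (the cover $\{\Spec A_f,\Spec B\}$ is fpqc, and the nerve of a $2$-fiber product of groupoids is the homotopy pullback of the nerves), but note that the reference \cite[V.1]{Lam} concerns Quillen's Zariski-local patching, not the \'etale case, so it does not quite justify the step you need.
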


\begin{rem}
\label{rem:requals1}
Theorem \ref{thm:geometricrepresentation} is trivially true when $r = 0$.  When $r = 1$, the bijection of Theorem \ref{thm:geometricrepresentation} holds for $S$ regular and $X$ an arbitrary smooth $S$-scheme, by \cite[\S 4 Proposition 3.8]{MV}.
\end{rem}

\begin{proof}
Consider the commutative square
\begin{equation}\label{eqn:Gr}
	\xymatrix{
	\hom(X,\Gr_r) \ar[r] \ar@{->>}[d] & [X,\Gr_r]_{\aone} \ar[d] \\
	\ar[r] \mathscr{V}_r(X) & [X,B\mathbf{Vect}_r]_{\aone},
	}
\end{equation}
where $X\in\Sm_S^\aff$ and the vertical maps are induced by $\gamma_r$. Since $\gamma_r$ is an isomorphism in $\ho{S}$, the right vertical map is a bijection. It remains to show that the bottom horizontal map is a bijection.

The classical theory of faithfully flat descent for vector bundles shows that $\mathbf{Vect}_r$ is a stack for the \'etale topology on $\Sm_S$. By \cite[Theorem 1.1]{Hollander}, this is equivalent to $B\mathbf{Vect}_r$ being \'etale-local, in the sense of Definition~\ref{dfn:t-local}. In particular, by Theorem~\ref{thm:cd}, $B\mathbf{Vect}_r$ satisfies Nisnevich excision. The hypothesis regarding the Bass--Quillen conjecture guarantees that $\pi_0B\mathbf{Vect}_r$ is $\aone$-invariant on affine schemes.  Therefore, the simplicial presheaf $B\mathbf{Vect}_r$ satisfies the hypotheses of Theorem \ref{thm:representability} and we conclude that the bottom map in~\eqref{eqn:Gr} is a bijection, as desired.  The final statement follows immediately from Theorem \ref{thm:lindelpopescu}.
\end{proof}

\begin{footnotesize}
\bibliographystyle{alpha}
\bibliography{bundles}
\end{footnotesize}

\Addresses
\end{document}